\newtheorem{theorem}{Theorem}
\newcommand{\bt}{\begin{theorem}}
\newcommand{\et}{\end{theorem}}
\newtheorem{lemma}{Lemma}
\newcommand{\bl}{\begin{lemma}}
\newcommand{\el}{\end{lemma}}
\newtheorem{corollary}{Corollary}
\newcommand{\bc}{\begin{corollary}}
\newcommand{\ec}{\end{corollary}}
\newtheorem{conjecture}{Conjecture}
\newcommand{\bconj}{\begin{conjecture}}
\newcommand{\econj}{\end{conjecture}}
\newtheorem{problem}{Problem}
\newcommand{\bprob}{\begin{problem}}
\newcommand{\eprob}{\end{problem}}
\newcommand{\beq}{\begin{equation}}
\newcommand{\eeq}{\end{equation}}
\newcommand{\benum}{\begin{enumerate}}
\newcommand{\eenum}{\end{enumerate}}
\newcommand{\N}{\ensuremath{ \mathbf N }}
\newcommand{\Z}{\ensuremath{\mathbf Z}}
\newcommand{\R}{\ensuremath{\mathbf R}}
\newcommand{\mcb}{\ensuremath{ \mathcal B}}
\newcommand{\mcx}{\ensuremath{ \mathcal X}}
\newcommand{\mbx}{\ensuremath{ \mathbf x}}
\newcommand{\bmat}{\left(\begin{matrix}}
\newcommand{\emat}{\end{matrix}\right)}
\newcommand{\bsmallmat}{\left(\begin{smallmatrix}}
\newcommand{\esmallmat}{\end{smallmatrix}\right)}
\DeclareMathOperator{\support}{\text{support}}
\title{Triangular gaps in the most frequent sizes of $hA$ for $|A|=4$}
\author{Steven Senger}
\address{Department of Mathematics\\Missouri State University\\ Springfield, MO 65809}
\email{stevensenger@missouristate.edu}
\date{\today}
\begin{document}
\maketitle

\begin{abstract}
We explain the triangular gaps observed experimentally in the most popular sizes of the $h$-fold iterated sumset, $hA,$ when $A$ is a randomly chosen four-element subset of the first $q$ natural numbers, for $q$ much larger than $h.$
\end{abstract}

\section{Introduction}
It is well-known that for sufficiently large $q,$ ``most" subsets of $[1..q]$ are $B_h$-sets for parameters $h$ taken to be much smaller than $q.$ Mel Nathanson made this precise in \cite{MelBh}. At the 2025 meeting of the Combinatorial and Additive Number Theory (CANT) conference, he also observed computationally that for a fixed $q,$ the most frequent sizes $hA,$ where $A$ is a four-element subset of $[1..q],$ were separated by consecutive triangular numbers. See his work in \cite{Nathanson2025} and see \cite{OBryant2025} by Kevin O'Bryant for significant insight into this problem and related problems.

The primary goal of this note is to offer a relatively simple explanation for the triangular gap phenomenon, based on some combinatorial estimates. We also present a few related arguments which may be of independent interest. 

\subsection{Notation}

When $u,v\in \Z,$ define the \emph{integer interval} 
\[
[u..v] = \{n \in \Z: u \leq n \leq v \}\text{ and } [v]:=[1..v].
\]

Let $A$ be a finite set of integers and let $hA:=A+A+\dots+A$ with $h$ copies of $A$ to be the $h$-fold sumset of $A$. Notice that if $A \subseteq [a,b]$, then $hA \subseteq [ha,hb]$. Next, use $\binom{[q]}{k}$ to denote the set of all $k$-element subsets of the integer interval $[1..q]$.
Then
\[
\left|\binom{[q]}{k}\right| = \binom{q}{k} = \frac{q^k}{k!} + O\left( q^{k-1}\right).
\]

Let $\mcx_{h,k}$ be the set of all $k$-tuples $\mbx = (x_1,\ldots, x_k)$ 
of nonnegative integers such that $\sum_{i=1}^k x_i = h$. We define the number $M_{h,k}$ to be
\[
M_{h,k}:=|\mcx_{h,k}| = \binom{h+k-1}{k-1} = \frac{h^{k-1}}{(k-1)!} + O_k\left( h^{k-2}\right).
\]
The \emph{support} of the vector $\mbx = (x_1,\ldots, x_k) \in \mcx_{h,k}$ 
is the set 
\[
\support(\mbx) = \{i \in [1..k]: x_i \geq 1\}. 
\] 

For $A = \{a_1,\ldots, a_k\} \in \binom{[q]}{k}$ and $n \in \Z$,   the 
\emph{representation function} $r_{A,h}(n)$ counts the number 
of representations of $n$ as a sum of $h$ elements of $A$. 
Equivalently, writing $\vec{A} = (a_1,\ldots, a_k) \in \N_0^k$, we have 
\[
r_{A,h}(n) = \left|\left\{ \mbx \in \mcx_{h,k}: \mbx\cdot \vec{A} = n \right\}\right|.
\]

The set $A \in \binom{[q]}{k}$ is a $B_h$-set if $r_{A,h}(n) = 0$ or 1 for all integers $n$.
Let $\mcb_{h,k}(q)$ be the set of all $k$-element $B_h$-sets in the interval $[1..q]$.
For all positive integers $k$ we have  
\[
\binom{[q]}{k} = \mcb_{1,k}(q) \supseteq \cdots \supseteq \mcb_{h,k}(q) \supseteq \mcb_{h+1,k}(q) \supseteq \cdots.
\]

One central object of our study will be the set of $B_h$-sets that are not $B_{h+1}$-sets, which we denote by $\mcb^*_{h,k}(q).$ That is,
\[\mcb^*_{h,k}(q) = \mcb_{h,k}(q) \setminus \mcb_{h+1,k}(q).\]

Given sets $A,B\subseteq \N,$ if $a,b,c,d\in A$ are (not necessarily distinct) elements chosen so that $\{a,b\}\neq\{c,d\},$ and $a+b=c+d,$ then we notice that the sumset $A+A$ will be strictly smaller than $M_{2,|A|}.$ We call this a {\it collision} in $A+A$. We similarly define collisions in $hA$ to be cases where some element of $hA$ can be written as two sums of $h$ elements of $A$ that are not merely permutations of one another. For example, $a+b+c = 3d$ is a collision, but $a+2b = b+a+b$ is not. 

The ``triangular gaps" appear in the frequency of the sizes of $hA$ because this phenomenon happens at different values of $h$ for different sets $A.$ To describe this, we introduce some terminology. If we fix $h\geq 2,$ we will call a potential $h$-fold iterated sumset size {\it $h$-frequent} if it occurs $\Omega(h^{-5}q^3)$ times, or {\it $h$-rare} if it occurs $O(h^{13}q^2)$ times.

\subsection{Main results}
Essentially, the argument is that for appropriate choices of $q$ and $h,$ we can show that the maximum possible sumset size $M_{h,4}$ occurs most often, for $\Theta(q^4)$ choices of $A\in\binom{[q]}{4},$ and that the smaller iterated sumset sizes occur frequently ($\Theta(q^3)$ times) when they are a triangular gap away from the previous frequent size, and rarely ($O(q^2)$ times) otherwise. We do this by giving upper and lower bounds on the sizes of $B_{\ell,4}^*(q)$ for relevant choices of $\ell,$ as well as giving upper and lower bounds on the iterated sumset sizes for subsets of $B_{\ell,4}^*(q)$ that are frequent and rare. Specifically, we show that for most $A\in B_{\ell,4}^*(q),$ we will have $|(h+\ell)A|=M_{h,4}-\binom{\ell+2}{3},$ which gives rise to the triangular gaps, and we will show that the exceptional choices are comparatively uncommon.

The first result is a technical lemma that allows us to estimate the number of $B_h$-sets that fail to be $B_{h+1}$-sets, as well as the number of such sets that have more than one collision. Specifically, it tells us that for sets in $\mathcal B_{h,4}^*(q),$ the iterated sumset size $M_{h,4}-1$ is $h$-frequent, while smaller sizes are $h$-rare. Most of the proof relies on highly geometric arguments. These are postponed until Section \ref{lemmaProofSec}.

\begin{lemma}\label{4max}
Given $q$ sufficiently large with respect to $h\geq 2,$ we have
\[(i)~~|\mcb^*_{h,4}(q)|= O\left(h^7q^3\right),\]
\[(ii)~~|\mcb^*_{h,4}(q)|= \Omega\left(h^{-5}q^3\right),\text{ and }\]
\[(iii)~~\left|\{A\in\mcb^*_{h,4}(q): |(h+1)A|\leq M_{h+1,4}-2 \}\right| =O\left(h^{13}q^2\right).\]
\end{lemma}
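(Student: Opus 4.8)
The plan is to estimate the three quantities by reducing everything to counting $4$-element subsets $A=\{a_1,a_2,a_3,a_4\}\subseteq[q]$ according to the collision structure of $hA$ and $(h+1)A$. Recall that $A\in\mcb^*_{h,4}(q)$ means $A$ has no collision in $hA$ but at least one collision in $(h+1)A$; a collision in $(h+1)A$ is witnessed by a pair $(\mbx,\mby)\in\mcx_{h+1,4}\times\mcx_{h+1,4}$ with $\mbx\neq\mby$ (not a permutation of one another — but since the $a_i$ are listed in a fixed order this just means $\mbx\neq\mby$ as tuples) and $(\mbx-\mby)\cdot\vec A=0$. Writing $\mbv=\mbx-\mby$, the nonzero vector $\mbv\in\Z^4$ satisfies $\sum v_i=0$, $\|\mbv\|_\infty\le h+1$, and $\mbv\cdot\vec A=0$. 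Conversely, the requirement that $A$ have \emph{no} collision in $hA$ forbids all such $\mbv$ that arise from $\mcx_{h,4}$, i.e. all $\mbv\ne 0$ with $\sum v_i=0$, $\sum|v_i|\le 2h$, supported "compatibly" (positive part and negative part each have coordinate-sum at most $h$), and $\mbv\cdot\vec A=0$. So membership in $\mcb^*_{h,4}(q)$ says: $\vec A$ lies on at least one of the hyperplanes $\{\mbx:\mbv\cdot\mbx=0\}$ for $\mbv$ in the "level $h+1$" family $\mcv_{h+1}$, but on none of the hyperplanes for $\mbv$ in the smaller "level $h$" family $\mcv_h$. The number of relevant $\mbv$ at either level is $O(h^3)$ (each is determined by a bounded-height integer vector in a $3$-dimensional space), which is where the polynomial-in-$h$ factors will come from.

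For part (iii) — the one I would attack first as the cleanest — I want to bound the number of $A\in\mcb^*_{h,4}(q)$ for which $(h+1)A$ has \emph{at least two} independent collisions (that is, $|(h+1)A|\le M_{h+1,4}-2$; one collision only drops the size by $1$, so $|(h+1)A|\le M_{h+1,4}-2$ forces either two collision-pairs giving distinct dropped values, or a "triple" collision, in any case at least two linearly independent witness vectors, or two that are dependent but then a third configuration — I would prove the clean statement that it forces two linearly independent $\mbv,\mbv'\in\mcv_{h+1}$ with $\mbv\cdot\vec A=\mbv'\cdot\vec A=0$). Two independent linear conditions cut the $4$-dimensional lattice of candidate $\vec A$ down to a rank-$2$ sublattice, hence at most $O(q^2)$ points of it lie in $[q]^4$; multiplying by the $O(h^3)\cdot O(h^3)=O(h^6)$ choices of the pair $(\mbv,\mbv')$ would give $O(h^6 q^2)$, comfortably inside the claimed $O(h^{13}q^2)$. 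The slack in the exponent $13$ suggests the author's bookkeeping of how many witness vectors to consider is more generous than this crude count; I would not worry about matching it exactly, only about getting \emph{some} bound of the form $O(h^{c}q^2)$.

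For part (i), the upper bound $O(h^7q^3)$: every $A\in\mcb^*_{h,4}(q)$ lies on some hyperplane $H_\mbv=\{\mbv\cdot\mbx=0\}$ with $\mbv\in\mcv_{h+1}$. For a \emph{fixed} primitive $\mbv$, the integer points of $H_\mbv\cap[q]^4$ number $\Theta(q^3)$, and there are $O(h^3)$ choices of $\mbv$, giving $O(h^3 q^3)$ immediately — already better than $O(h^7 q^3)$, so part (i) should be easy and the extra powers of $h$ are harmless overcounting (perhaps from summing over which of the four $a_i$'s play which role, or from not reducing $\mbv$ to primitive). For part (ii), the lower bound $\Omega(h^{-5}q^3)$: fix one convenient witness vector $\mbv\in\mcv_{h+1}\setminus\mcv_h$ — for instance something like $\mbv=(h+1,-1,\dots)$ with support forcing a genuinely new collision at level $h+1$ — and count $A$'s on $H_\mbv$ that avoid \emph{all} hyperplanes $H_\mbw$, $\mbw\in\mcv_h$. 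On the $\Theta(q^3)$-point set $H_\mbv\cap[q]^4$, each forbidden $H_\mbw$ removes a codimension-one slice, i.e. $O(q^2)$ points; there are $O(h^3)$ such $\mbw$, so we lose only $O(h^3 q^2)$, leaving $\Theta(q^3)-O(h^3q^2)=\Omega(q^3)$ once $q\gg h^3$. That would give $\Omega(q^3)$, not $\Omega(h^{-5}q^3)$; the $h^{-5}$ presumably appears because one must be more careful that $H_\mbv$ really does meet $[q]^4$ in $\Theta(q^3)$ points rather than $\Theta(h^{-c}q^3)$ (a hyperplane like $(h+1)x_1=x_2+\cdots$ only has $\sim q^3/(h+1)$ lattice points in the box, since $x_1\le q/(h+1)$), and similarly that the inclusion–exclusion against the $\mcv_h$-hyperplanes does not eat a constant fraction — so really $\Omega(h^{-1}q^3)$ from the box-shape and a further cushion for safety.

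The main obstacle, I expect, is part (ii): producing a witness vector $\mbv$ at level $h+1$ that (a) has many lattice points of $[q]^4$ on its hyperplane despite the "tall" coordinates forced by level $h+1$ rather than $h$, and (b) genuinely certifies that the resulting $A$ fails to be a $B_{h+1}$-set while simultaneously arranging that the $O(h^3)$ "bad" hyperplanes from $\mcv_h$ (which would make $A$ fail to be a $B_h$-set) can be dodged without killing the count. Parts (i) and (iii), by contrast, are essentially "a hyperplane in $[q]^4$ has $O(q^3)$ lattice points" and "two independent hyperplanes have $O(q^2)$", times a crude count of candidate normal vectors — the kind of estimate where one deliberately throws away powers of $h$ to keep the argument short, which is consistent with the loose exponents $7$ and $13$ in the statement.
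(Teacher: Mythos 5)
Your hyperplane-counting framework is the right one, and for parts (i) and (iii) your outline matches the paper's geometric argument in substance (the paper phrases it with pairs of affine hyperplanes $P_{\mbx}(s)\cap P_{\mby}(s)$ summed over the $\Theta(hq)$ values of $s$, rather than with the single homogeneous hyperplane $(\mbx-\mby)\cdot\vec A=0$; your difference-vector count is in fact tighter, giving $O(h^3q^3)$ and $O(h^6q^2)$ where the paper settles for $O(h^7q^3)$ and $O(h^{13}q^2)$). The one ingredient you defer in (iii) --- that $|(h+1)A|\le M_{h+1,4}-2$ forces two \emph{linearly independent} witness vectors --- is genuinely needed and is where the paper does real work: it first proves that two sum vectors in $\mcx_{h+1,4}$ colliding on a $B_h$-set must have disjoint supports (otherwise subtracting a common basis vector produces a collision already in $hA$), from which it deduces a representation bound $r_{A,h+1}(n)\le 2$ and the non-degeneracy of the relevant quadruple of hyperplanes. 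Your own setup already contains the seed of this (a surviving witness $\mbv=\mbx-\mby$ must have positive part of coordinate-sum exactly $h+1$, which forces two distinct witnesses to be non-proportional), but you should state and prove that lemma rather than promise it.

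The genuine gap is part (ii), and you have correctly identified it yourself without closing it. Your plan requires a specific $\mbv$ arising from a disjoint-support pair in $\mcx_{h+1,4}$ such that (a) $H_{\mbv}\cap[q]^4$ contains $\Omega(h^{-c}q^3)$ lattice points, and (b) \emph{no} level-$h$ difference hyperplane coincides with $H_{\mbv}$ --- if even one forbidden $\mbw$ is proportional to $\mbv$, the union bound does not merely lose a slice, it annihilates the entire count. Neither point is established in your writeup. Both can be: taking $\mbv=(h,-(h+1),1,0)=(h,0,1,0)-(0,h+1,0,0)$, the vector is primitive and any integer multiple has positive part summing to at least $h+1>h$, so it cannot equal a difference of two elements of $\mcx_{h,4}$; and its hyperplane carries $\Theta(h^{-1}q^3)$ points of $[q]^4$. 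The paper takes a different and more hands-on route: it explicitly parametrizes sets with $c=(h+1)b-ha$, $a\le q(10h)^{-3}$, $b\in[3ha,\,q(10h)^{-2}]$, $d>\tfrac{99}{100}q$, and verifies \emph{algebraically} that the only collision in any $(h+\ell)A$ is the trivial one $ha+c=(h+1)b$ plus a common summand. This costs the factor $h^{-5}$ but buys more than your union bound would: it pins down $|(h+\ell)A|=M_{h+\ell,4}-\binom{\ell+2}{3}$ exactly for all $\ell<h$, which is what Theorem \ref{1and3} needs. Your route, once the two missing verifications are supplied, would prove Lemma \ref{4max}(ii) (indeed with the better constant $\Omega(h^{-1}q^3)$), but not the stronger statement the paper extracts from the same construction.
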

Specifically, we will use this lemma to prove the following theorem. The first statement in it is a fairly direct corollary, and will be crucial to explaining the triangular gaps. The second statement is already known, as can be seen in far greater generality in \cite{MelUbiq} by Nathanson, but we include it as it is a direct consequence of the rest of the argument.
\begin{theorem}\label{4frequent}
Suppose $q$ is sufficiently large with respect to $h\geq 2.$ For $A\in\binom{[q]}{4}$ iterated sumset size $M_{h,4}$ occurs $\Theta(q^4)$ times and the size $M_{h,4}-1$ occurs $\Omega(h^{-5}q^3)$ and $O(h^{13}q^3)$ times. Moreover, the proportion of $B_h$-sets in $\binom{[q]}{4}$ is increasing in $q.$
\end{theorem}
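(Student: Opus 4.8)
The plan is to derive Theorem \ref{4frequent} from Lemma \ref{4max} in two stages, treating the sumset-size statements first and the monotonicity statement second.

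\medskip

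\textbf{The size $M_{h,4}$ occurs $\Theta(q^4)$ times.} First I would observe that $A\in\binom{[q]}{4}$ has $|hA|=M_{h,4}$ precisely when $A$ is a $B_h$-set, i.e. when $A\in\mcb_{h,4}(q)$, since the representation vectors in $\mcx_{h,4}$ biject with elements of $hA$ exactly when there are no collisions. The upper bound $|\mcb_{h,4}(q)|\le\binom{q}{4}=O(q^4)$ is immediate. For the lower bound $\Omega(q^4)$, I would exhibit a positive-proportion family of $B_h$-sets: for instance, sets of the form $\{a_1,a_2,a_3,a_4\}$ whose elements are spread out enough (say $a_{i+1}>h\,a_i$, or a Sidon-in-base-$h$ construction) force $r_{A,h}$ to take only the values $0$ and $1$; counting the tuples $a_1<a_2<a_3<a_4$ in $[1..q]$ obeying such a lacunary condition gives $c_h q^4$ sets for a constant $c_h>0$ depending on $h$. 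Alternatively, since $\binom{[q]}{4}=\mcb_{1,4}(q)$ and the probability that a random $4$-set fails to be a $B_h$-set is bounded away from $1$ (a second-moment / union-bound estimate over the $O(h^2)$ possible collision equations, each of which carves out an $O(q^3)$-dimensional slice), we again get $|\mcb_{h,4}(q)|=\Theta(q^4)$.

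\medskip

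\textbf{The size $M_{h,4}-1$ occurs $\Omega(h^{-5}q^3)$ and $O(h^{13}q^3)$ times.} I would decompose the set of $A$ with $|hA|=M_{h,4}-1$ according to the smallest $\ell\le h$ for which $A$ first fails to be a $B_\ell$-set; that is, $A\in\mcb^*_{\ell-1,4}(q)$ for a unique $\ell$. For $\ell=h$, Lemma \ref{4max}(ii) gives $\Omega(h^{-5}q^3)$ sets in $\mcb^*_{h-1,4}(q)$... but I must be careful: being in $\mcb^*_{h-1,4}(q)$ means $A$ is a $B_{h-1}$-set but not a $B_h$-set, and I need exactly $|hA|=M_{h,4}-1$, i.e. exactly one collision at level $h$. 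Lemma \ref{4max}(iii) (applied at $h-1$) says all but $O(h^{13}q^2)$ of these sets have $|hA|=M_{h,4}-1$ on the nose, so the $\Omega(h^{-5}q^3)$ lower bound survives. For the upper bound $O(h^{13}q^3)$, I would sum the contributions over $\ell=2,\dots,h$: sets contributing to $|hA|=M_{h,4}-1$ that lie in $\mcb^*_{\ell-1,4}(q)$ for $\ell<h$ must have their single collision ``propagate'' to give a deficiency of exactly $1$ at level $h$, which by the triangular-gap mechanism (a single collision at level $\ell$ generically produces a deficiency $\binom{h-\ell+2}{3}>1$ when $h>\ell$) is impossible unless the set lies in a lower-dimensional exceptional locus counted by Lemma \ref{4max}(iii); each such locus has size $O(h^{13}q^2)$, and there are $O(h)$ values of $\ell$, for a total of $O(h^{14}q^2)=o(h^{13}q^3)$. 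The main term $O(h^7q^3)$ from Lemma \ref{4max}(i) at level $\ell=h$ dominates, giving $O(h^{13}q^3)$ overall (in fact $O(h^7q^3)$, but the stated weaker bound suffices).

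\medskip

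\textbf{Monotonicity of the proportion of $B_h$-sets.} Let $p_h(q)=|\mcb_{h,4}(q)|/\binom{q}{4}$. I would show $p_h(q+1)\ge p_h(q)$ by a combinatorial injection / averaging argument: deleting the element $q+1$ from a $B_h$-set in $[1..q+1]$ yields either a $B_h$-set in $[1..q]$ (if $q+1\notin A$) or a $(3)$-element... — more precisely, I would compare $|\mcb_{h,4}(q+1)|$ with $\frac{q+1}{q-3}|\mcb_{h,4}(q)|$ by noting that each $A\in\mcb_{h,4}(q)$ sits inside exactly $q-3$ many $4$-sets... that doesn't quite work dimensionally, so instead I would use the cleaner observation that the ``bad'' events (collisions) only become harder to satisfy in a larger window: formally, partition $\binom{[q+1]}{4}$ into those $4$-sets avoiding $q+1$ (a copy of $\binom{[q]}{4}$ with the same proportion of $B_h$-sets) and those containing $q+1$, and argue via a shifting/compression argument that the latter class has $B_h$-proportion at least $p_h(q)$. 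The hard part of this step — and the main obstacle in the whole theorem — is making the monotonicity argument rigorous without an explicit formula for $|\mcb_{h,4}(q)|$; I expect to handle it either by a Kruskal--Katona-style compression showing the family of non-$B_h$-sets is ``downward closed enough,'' or by the direct estimate that the number of collisions involving a fixed largest element $a_4$ is non-increasing in $a_4$, so that adding the slot $q+1$ at the top adds a sub-average density of bad sets. I would close by remarking that this monotonicity is subsumed by Nathanson's general ubiquity results in \cite{MelUbiq}, so only a brief self-contained argument is needed here.
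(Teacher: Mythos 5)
Your treatment of the two sumset-size claims is correct and is essentially the paper's argument: the $\Theta(q^4)$ count for $M_{h,4}$ comes from writing $\mcb_{h,4}(q)$ as $\binom{[q]}{4}$ minus the disjoint union of the $\mcb^*_{i,4}(q)$ for $i<h$ and applying Lemma \ref{4max}~$(i)$ to each (a union bound over collision hyperplanes, exactly your second alternative — though note the number of collision equations at level $h$ is $\Theta(h^6)$, not $O(h^2)$, which still gives $O(h^8q^3)=o(q^4)$ in total); and the $M_{h,4}-1$ count comes from Lemma \ref{4max}~$(ii)$ for the $\Omega(h^{-5}q^3)$ lower bound after discarding the $O(h^{13}q^2)$ exceptional sets of Lemma \ref{4max}~$(iii)$. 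Your handling of the upper bound is actually more explicit than the paper's: you correctly observe via Lemma \ref{gapLemma} that a set in $\mcb^*_{\ell-1,4}(q)$ with $\ell<h$ has $|hA|\le M_{h,4}-\binom{h-\ell+3}{3}\le M_{h,4}-4$, so only $\mcb^*_{h-1,4}(q)$ can contribute to the size $M_{h,4}-1$, and Lemma \ref{4max}~$(i)$ bounds that by $O(h^7q^3)$. (You hedge this as ``impossible unless the set lies in an exceptional locus,'' but Lemma \ref{gapLemma} is unconditional, so no exceptional locus is needed there.)

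On the monotonicity statement you have talked yourself into a harder problem than the paper solves. The paper does not attempt a compression or injection argument; it simply evaluates the same count to get
\[
\frac{|\mcb_{h,4}(q)|}{\binom{q}{4}}\ \ge\ 1-O\!\left(\frac{h^8}{q}\right)
\]
and reads off that this proportion grows with $q$, deferring the genuinely general statement to \cite{MelUbiq}. Your instinct that a literal term-by-term monotonicity in $q$ does not follow from such a one-sided asymptotic bound is sound — but since the paper itself only offers this asymptotic argument (and explicitly flags the statement as already known), your proposal should be read as matching the paper's level of rigor here once you drop the unfinished compression detour and simply record the $1-O(h^8/q)$ estimate.
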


To see how these results relate to the observed triangular gaps between popular iterated sumset sizes, we prove the following. It shows that if a $B_h$-set fails to be a $B_{h+1}$-set, then we have a straightforward upper bound for the sizes of subsequent iterated sumsets.

\begin{lemma}\label{gapLemma}
For $A\in \mathcal B_{h,4}^*(q),$ we have $|(h+\ell)A|\leq M_{h+\ell,4}-{\ell+2 \choose 3}.$
\end{lemma}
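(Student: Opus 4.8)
The plan is to convert one collision into a large, explicitly counted surplus of representations among the $M_{h+\ell,4}$ tuples that sum to $h+\ell$. Since $A\in\mcb^*_{h,4}(q)$ is a $B_h$-set that fails to be a $B_{h+1}$-set, there exist distinct $\mbx,\mby\in\mcx_{h+1,4}$ with $\mbx\cdot\vec{A}=\mby\cdot\vec{A}$; set $\mbw:=\mby-\mbx\in\Z^4$, so $\mbw\neq\mbo$, $\sum_i w_i=0$, and $\mbw\cdot\vec{A}=0$. (Subtracting the coordinatewise minimum of $\mbx,\mby$ and invoking the $B_h$-property, one may even arrange $\support(\mbx)\cap\support(\mby)=\emptyset$; this gives equality below but is not needed.)

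First I would form the graph $G$ on vertex set $\mcx_{h+\ell,4}$ in which $\mbu$ and $\mbu+\mbw$ are joined whenever both lie in $\mcx_{h+\ell,4}$. Because $\mbw\cdot\vec{A}=0$, the two endpoints of every edge represent the same integer, so each connected component of $G$ is contained in a single fiber of the map $\mbu\mapsto\mbu\cdot\vec{A}$; since the nonempty fibers are indexed by the elements of $(h+\ell)A$, this yields
\[
|(h+\ell)A|\ \leq\ \#\{\text{components of }G\}.
\]
Next I would check that $G$ is a forest: along any cycle the successive edge-differences are all $\pm\mbw$, and since a cycle in a simple graph cannot backtrack they must all have the same sign, so they cannot sum to $\mbo$. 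Hence $\#\{\text{components of }G\}=M_{h+\ell,4}-\#\{\text{edges of }G\}$, and it remains to bound the number of edges from below.

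A tuple $\mbu\in\mcx_{h+\ell,4}$ is the lower endpoint of an edge precisely when $\mbu+\mbw\geq\mbo$ coordinatewise, that is, $u_i\geq (x_i-y_i)^+$ for every $i$; the substitution $u_i\mapsto u_i-(x_i-y_i)^+$ puts such tuples in bijection with $\mcx_{h+\ell-C,4}$, where $C:=\sum_i (x_i-y_i)^+$. Since $C\leq\sum_i x_i=h+1$, monotonicity of $M_{\cdot,4}$ gives
\[
\#\{\text{edges of }G\}\ =\ M_{h+\ell-C,4}\ \geq\ M_{\ell-1,4}\ =\ \binom{\ell+2}{3},
\]
and combining the displays yields $|(h+\ell)A|\leq M_{h+\ell,4}-\binom{\ell+2}{3}$.

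The bookkeeping — the substitution in the edge count and the bound $C\leq h+1$ — is routine; the step I would take most care over is the structural claim that $G$ is a forest whose components lie in single fibers, since this is exactly what turns one algebraic relation $\mbw\cdot\vec{A}=0$ into the precise triangular deficiency $M_{\ell-1,4}=\binom{\ell+2}{3}$. Equivalently, one may partition $\mcx_{h+\ell,4}$ into the finite arithmetic progressions $(\mbv+\Z\mbw)\cap\mcx_{h+\ell,4}$ and observe that a progression of length $m$ forces at least $m-1$ repeated representations; I would present the argument in whichever of these two equivalent forms reads more cleanly.
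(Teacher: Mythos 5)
Your proof is correct, and it is at heart the same idea as the paper's: take the single collision $\mbx\cdot\vec{A}=\mby\cdot\vec{A}$ witnessing $A\notin\mcb_{h+1,4}(q)$ and translate it through $(h+\ell)A$ to force $\binom{\ell+2}{3}$ coincidences. Where you differ is in how you certify that these coincidences actually cost $\binom{\ell+2}{3}$ distinct values: the paper simply lists the pairs $f+e=g+e$ over the $\binom{\ell+2}{3}$ choices of $(\ell-1)$-fold sums $e$ and asserts the deficiency ``even if the other sums are as distinct as possible,'' leaving implicit the check that these forced identifications do not overlap or chain in a way that would make the count collapse. Your graph $G$ with edge direction $\mbw=\mby-\mbx$ handles exactly this point: the components lie in single fibers of $\mbu\mapsto\mbu\cdot\vec{A}$, the no-backtracking argument shows $G$ is a forest (each component is an arithmetic progression in direction $\mbw$, hence a path), and so the number of fibers is at most $M_{h+\ell,4}$ minus the edge count, which your shift bijection evaluates as $M_{h+\ell-C,4}\geq M_{\ell-1,4}=\binom{\ell+2}{3}$ since $C=\sum_i(x_i-y_i)^+\leq h+1$. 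So your write-up is a rigorous formalization of the paper's sketch rather than a new route; the one thing it buys is an explicit justification of the ``no double-counting'' step, and as a bonus it shows the deficiency is exactly $M_{h+\ell-C,4}$ attributable to this one relation (with equality $C=h+1$ when the supports are disjoint, which Lemma \ref{ortho} guarantees for sets in $\mcb^*_{h,4}(q)$).
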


We combine this lemma with some of the arguments in the proof of Theorem \ref{4frequent} to obtain the following quantitative characterization of the triangular gap phenomenon.

\begin{theorem}\label{1and3}
Suppose $q$ is sufficiently large with respect to $h \geq 2,$ and $\ell < h.$ The numbers of elements of $\binom{[q]}{4}$ with sizes of the form $M_{h,4}-{\ell+2 \choose 3},$ are $\Omega(h^{-5}q^3)$, and the numbers of elements with sizes between those are $O(h^{13}q^2).$
\end{theorem}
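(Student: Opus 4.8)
The plan is to build Theorem \ref{1and3} out of the three pieces already assembled: Lemma \ref{4max}, Lemma \ref{gapLemma}, and the counting arguments underlying Theorem \ref{4frequent}. The skeleton of the argument is this. Fix $\ell < h$ and consider a set $A \in \binom{[q]}{4}$ whose iterated sumset size at level $h$ equals exactly $M_{h,4} - \binom{\ell+2}{3}$. I would first argue that such an $A$ must, at some earlier level, have transitioned from a $B_j$-set to a non-$B_{j+1}$-set in a very controlled way: writing $h = j + \ell'$, Lemma \ref{gapLemma} forces $|(j+\ell')A| \le M_{h,4} - \binom{\ell'+2}{3}$, so to land exactly on the deficit $\binom{\ell+2}{3}$ one needs $\ell' = \ell$ (plus a verification that no additional collisions accumulate past level $j$). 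Thus the sets achieving the size $M_{h,4}-\binom{\ell+2}{3}$ are, up to lower-order error, exactly the sets in $\mcb^*_{j,4}(q)$ with $j = h-\ell$ that have a \emph{single} collision which then propagates cleanly, and Lemma \ref{gapLemma} becomes an equality for them.

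Second, for the lower bound $\Omega(h^{-5}q^3)$: by Lemma \ref{4max}(ii), $|\mcb^*_{j,4}(q)| = \Omega(j^{-5}q^3)$, and since $j = h - \ell \ge 1$ and $\ell < h$, we have $j^{-5} \ge h^{-5}$ up to constants, so there are at least $\Omega(h^{-5}q^3)$ candidate sets. By Lemma \ref{4max}(iii), all but $O(j^{13}q^2) = O(h^{13}q^2)$ of them have a single collision, i.e. $|(j+1)A| = M_{j+1,4}-1$; for those, I would check (reusing the geometric bookkeeping from the proof of Lemma \ref{gapLemma}) that the collision propagates so that $|(j+\ell)A| = M_{j+\ell,4} - \binom{\ell+2}{3}$ exactly, giving $|hA| = M_{h,4} - \binom{\ell+2}{3}$. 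Subtracting the $O(h^{13}q^2)$ bad sets from the $\Omega(h^{-5}q^3)$ total leaves $\Omega(h^{-5}q^3)$ sets of the desired size — this is where I need $q$ large enough relative to $h$ for the cubic term to dominate the quadratic one.

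Third, for the upper bound $O(h^{13}q^2)$ on sizes \emph{strictly between} consecutive triangular-gap values: a set $A$ with such an intermediate size cannot be a $B_h$-set (those have size $M_{h,4}$), so it lies in some $\mcb^*_{j,4}(q)$. If it had only one collision it would land exactly on a triangular value by the propagation argument above, contradiction; hence it must have at least two collisions at level $j+1$, i.e. $|(j+1)A| \le M_{j+1,4}-2$. By Lemma \ref{4max}(iii) the number of such $A$ for a fixed $j$ is $O(j^{13}q^2) = O(h^{13}q^2)$, and summing over the at most $h$ relevant values of $j$ keeps the bound $O(h^{14}q^2)$, which I would absorb into $O(h^{13}q^2)$ by a slightly more careful count (only $j$ with $h - j = \ell$ contributes for a given target size, so in fact there is no sum over $j$). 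I should also handle the boundary cases $\ell = 0$ (the size $M_{h,4}$ itself, already covered by Theorem \ref{4frequent}) and confirm that distinct $\ell$ give distinct target sizes since $\binom{\ell+2}{3}$ is strictly increasing.

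The main obstacle I anticipate is the \emph{propagation} step: proving that a $B_j$-set with exactly one collision satisfies $|(j+\ell)A| = M_{j+\ell,4} - \binom{\ell+2}{3}$ with equality, not just the inequality from Lemma \ref{gapLemma}. This requires showing the single collision relation $\mbx \cdot \vec A = \mby \cdot \vec A$ generates exactly $\binom{\ell+2}{3}$ independent coincidences among sums of $j+\ell$ elements and no more — equivalently, that a generic such $A$ acquires no \emph{new} collisions as $h$ grows past $j$. I expect this to follow from the same dimension-counting / general-position arguments used to prove Lemma \ref{4max}(iii) and Lemma \ref{gapLemma}, applied one level at a time, but making the "no new collisions for all but $O(h^{13}q^2)$ sets" claim uniform in $\ell < h$ is the delicate point and is where I would spend most of the write-up.
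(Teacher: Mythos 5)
Your skeleton for the \emph{upper} bound on the intermediate sizes matches the paper's: combine Lemma \ref{gapLemma} (to show that sets from $\mcb^*_{j,4}(q)$ with $j\le h-\ell-1$ already fall below the next triangular value) with Lemma \ref{4max}~$(iii)$ (to bound the multi-collision sets in the remaining $\mcb^*_{j,4}(q)$ by $O(h^{13}q^2)$). But your \emph{lower} bound rests on a propagation claim that you correctly identify as the delicate point and then do not supply: namely, that for all but $O(h^{13}q^2)$ of the sets $A\in\mcb^*_{j,4}(q)$ with a single collision at level $j+1$, one has the \emph{equality} $|(j+\ell)A| = M_{j+\ell,4}-\binom{\ell+2}{3}$. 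This does not follow from Lemma \ref{4max}~$(iii)$, which controls coincidences only at the single level $j+1$; a set with exactly one collision in $(j+1)A$ can perfectly well acquire new, non-trivial collisions in $(j+2)A,\dots,(j+\ell)A$ (a fresh pair of sum vectors from $\mcx_{j+i,4}$ agreeing on $\vec A$ without being a consequence of the first collision), pushing $|hA|$ strictly below the target. Making your "no new collisions at any later level" claim rigorous would require a genuinely new lemma --- a version of Lemma \ref{4max}~$(iii)$ uniform over all levels up to $h$ --- which is exactly the part of the write-up you defer.

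The paper sidesteps this entirely by using Lemma \ref{vecPairLB2} (which you do not cite): it \emph{constructs} an explicit family of $\Omega(h^{-5}q^3)$ sets $\{a,b,c,d\}$ seeded with the single collision $ha+c=(h+1)b$, with the ranges of $a,b,c,d$ restricted so that $d$ is too large to participate in any collision and so that every coincidence among $a,b,c$ at every level up to $h+\ell$ is forced, by an elementary coefficient comparison, to be a trivial consequence of the seeded one. That gives the equality $|(h+\ell)A|=M_{h+\ell,4}-\binom{\ell+2}{3}$ for the whole family at once, which is all the lower bound needs; note the paper never claims (and does not need) that \emph{most} single-collision sets achieve equality, only that $\Omega(h^{-5}q^3)$ of them do. To repair your argument you should either import Lemma \ref{vecPairLB2} directly, or prove the uniform no-new-collisions statement by a union bound over levels $j+2,\dots,h$ and over pairs of sum vectors at each level, using the hyperplane-intersection counting from the proof of Lemma \ref{4max}~$(iii)$; the latter is plausible but is real additional work, not bookkeeping.
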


This gives us that for appropriate choices of $h,\ell$ and $q,$ the iterated sumset sizes
\[M_{h,4}, M_{h,4}-1, M_{h,4}-4, M_{h,4}-10, M_{h,4}-20, \dots, M_{h,4}-{\ell+2 \choose 3}\]
must all be $h$-frequent while the sizes between them must be $h$-rare. As this sequence is just $M_{h,4}$ with successive tetrahedral numbers subtracted, the adjacent terms will have differences equal to the gaps between adjacent tetrahedral numbers, which are consecutive triangular numbers. This shows that the gaps between the largest $h$ sumset sizes that are $h$-frequent must be triangular.

The basic idea will be to prove the main results in Section \ref{theoremProofSec}. This section is largely additive combinatorics, and will assume Lemma \ref{4max} $(i)$ and $(iii)$. In Section \ref{lemmaProofSec}, we give a fairly geometric proof of Lemma \ref{4max} $(i)$ and $(iii)$, and related results. We conclude with a discussion on generalizations to larger set sizes in Section \ref{futureSec}.

\subsection{Acknowledgments}
The author would like to thank Mel Nathanson for simplifying and generalizing numerous parts of this note, particularly for Lemma \ref{repNo}, Kevin O'Bryant for pointing out a crucial error in an earlier draft, and both of them along with Noah Kravitz for their patience and helpful comments, which have greatly improved the quality of this note. He would also like to thank the Vietnam Institute for Advanced Study in Mathematics (VIASM) for the hospitality and for the excellent working conditions.

\section{The additive arguments}\label{theoremProofSec}
In this section, we assume Lemma \ref{4max} $(i)$ and $(iii),$ postponing their proofs until the next section. Here, we prove the main additive combinatorial results. We begin by counting necessary collisions in higher iterated sumsets for any set in $\mathcal B_{h,4}^*(q).$ To illustrate this, we give an explicit example of a collision. We will often denote these by $f(a,b,c,d)$ and $g(a,b,c,d),$ where $f$ and $g$ are distinct linear functions whose coefficients correspond to the entries of sum vectors from $\mathcal X_{h+1,4}.$

With $h=2,$ and $A:=\{1,2,8,10\},$ we could check directly that $A$ is a $B_2$-set, but $1+1+10 = 2+2+8,$ so $A$ is not a $B_3$-set. We can examine this collision by writing $f(a,b,c,d) = 2a+d,$ and $g(a,b,c,d) = 2b+c.$ Here, $f$ corresponds to $(2,0,0,1)\in\mathcal X_{3,4},$ and $g$ corresponds to $(0,2,1,0)\in\mathcal X_{3,4}.$

Separately, notice that if we consider a set of four elements that form an arithmetic progression, then its iterated sumsets of will exhibit maximally many collisions. With these notions in tow, we proceed with the additive combinatorial arguments.

\subsection{Proof of Lemma \ref{gapLemma}}
\begin{proof}
Suppose $A\in \mathcal B_{h,4}^*(q)$. Let the four elements of $A$ be $a<b<c<d.$ Then $hA$ has maximal size, namely, $|hA|=M_{h,4}.$ Since $A\notin \mathcal B_{h+1,4}(q),$ we have that
\[|(h+1)A|\leq M_{h+1,4}-1.\]
This means that by ignoring repetitions due to mere permutations of terms, there is at least one pair of sums of $h+1$ elements from $A$ that evaluate to the same total. Let the functions $f(a,b,c,d)$ and $g(a,b,c,d)$ represent these sums, so $f(a,b,c,d)=g(a,b,c,d),$ but $f$ and $g$ correspond to distinct sum vectors in $\mathcal X_{h+1,4}.$

Now, when we consider the iterated sumset $(h+2)A,$ the size can be at most $M_{h+2,4}-4,$ because even if the other $(h+2)$-fold sums are as distinct as possible (having minimal collisions otherwise), we must have $f+a=g+a, f+b=g+b, f+c=g+c,$ and $f+d=g+d.$ Further, when we consider the iterated sumset $(h+3)A,$ the size can be at most $M_{h+3,4}-10,$ because even if the other $(h+3)$-fold sums are as distinct as possible, we must have
\[f+(a+a)=g+(a+a), f+(a+b)=g+(a+b), \dots, f+(d+d) = g+(d+d),\]
where there are 10 choices for pairs of elements added to both $f$ and $g.$ In general, when we consider $(h+\ell)A,$ it can have size at most $M_{h+\ell,4}-{\ell+2 \choose 3},$ as we will have exactly ${\ell+2 \choose 3}$ choices for $(\ell-1)$-tuples that yield the same $(h+\ell)$-fold sum when added to $f$ as when added to $g.$ 
\end{proof}
\subsection{Proof of Theorem \ref{4frequent}:}
We will get a handle on $|\mcb_{h+1,4}|$ by noticing that it is just the elements of ${[q] \choose 4}$ that are not in any of the $\mcb_{i,4}^*(q),$ for $i\leq h.$ This gives
\[\mcb_{h+1,4}(q)={[q] \choose 4}\setminus\bigcup_{i=1}^h \mcb_{i,4}^*(q).\]
By definition, the $\mcb_{i,4}^*(q)$ are disjoint, so we can apply Lemma \ref{4max} $(i)$ repeatedly for $i\leq h$ to bound the number of sets in $\mathcal B_{i,4}^*(q).$ So for each choice of $i,$ we remove at most $O\left(i^7q^3\right)$ sets from the total of ${q \choose 4},$ leaving
\begin{equation}\label{BhCount}
|\mcb_{h+1,4}(q)|={q \choose 4} - \sum_{i=1}^{h} O\left(i^7q^3\right) = {q \choose 4}-O\left(h^8q^3\right) =\Theta(q^4)
\end{equation}
sets left over as $B_{h+1}$ sets. So the most frequent size of $(h+1)A$ is $M_{h+1,4}.$

Appealing to Lemma \ref{4max} $(ii)$, we see that there are $\Omega\left(h^{-5}q^3\right)$ sets in $\mcb^*_{h,4}(q)$, each of which have size $\leq M_{h+1,4}-1.$ By Lemma \ref{4max} $(iii)$, we see that there are fewer than $\Theta(h^{13}q^2)$ sets with size strictly smaller than $M_{h+1,4}-1,$ meaning that there are at least $\Omega(h^{-5}q^3)$ sets $A$ with size $M_{h+1,4}-1,$ completing the proof of the first statement.

To prove the second statement, notice that evaluating the count $|\mcb_{h,4}(q)|$ in \eqref{BhCount} for increasing values of $q$ shows that for a fixed $h,$ the proportion of $\binom{[q]}{k}$ comprised by $B_h$-sets is indeed increasing. Specifically, the proportion of $B_h$-sets to the total number of sets in $\binom{[q]}{k}$ is given by
\[\frac{|\mcb_{h,4}(q)|}{\left|\binom{[q]}{k}\right|}=\frac{{q \choose 4}-O\left(h^8q^3\right)}{{q \choose 4}}\geq 1-O\left(h^2q^{-1}\right).\]
For a fixed $h,$ this proportion is clearly increasing as $q$ grows.

\subsection{Proof of Lemma 1 $(ii)$:}
In order to prove Theorem \ref{1and3}, we state a companion result to Lemma \ref{gapLemma} that shows that the upper bounds given there are achieved quite often. To prove this result, we show that for a given $h,$ there are many sets in $\binom{[q]}{4}$ that exhibit the expected behavior. Moreover, this result will imply Lemma \ref{4max} $(ii)$.

\begin{lemma}\label{vecPairLB2}
If $h\geq 2,$ then $|\mathcal B_{h,4}^*(q)|=\Omega\left(h^{-5}q^3\right).$ Moreover, there are $\Omega\left(h^{-5}q^3\right)$ choices of $A\in\mathcal B_{h,4}^*(q)$ giving $|(h+\ell)A| = M_{h+\ell,4}-\binom{\ell+2}{3}$ for all $\ell< h.$
\end{lemma}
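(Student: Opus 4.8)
The plan is to exhibit an explicit family of sets $A=\{a,b,c,d\}$ that are $B_h$-sets but not $B_{h+1}$-sets, and whose failure to be $B_{h+1}$-sets is caused by exactly one ``controlled'' collision, and then count how many such $A$ live in $\binom{[q]}{4}$. First I would fix the collision to be the simplest possible one: following the example $1+1+10=2+2+8$ in the paper, I would look for sets where $2a+d=2b+c$, i.e.\ the collision vectors $f\leftrightarrow(2,0,0,1)$ and $g\leftrightarrow(0,2,1,0)$ in $\mathcal X_{3,4}$, only scaled up to $\mathcal X_{h+1,4}$ by adding $(h-2)$ copies of a fixed coordinate (say padding with $(h-2)$ in the first coordinate, giving vectors summing to $h+1$). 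Concretely I would parametrize $A$ by three free parameters $x,y,z$ of size $\Theta(q)$ (for instance $b=a+x$, $c=b+y$, $d=c+z$ with a linear constraint forcing $2a+d=2b+c$, which pins one of $x,y,z$ in terms of the others), so that the number of admissible triples $(x,y,z)$ yielding $A\subseteq[1..q]$ is $\Omega(q^3)$; the factor $h^{-5}$ comes from needing the parameters to be small enough (of order $q/h$ or so) that all the relevant sums stay inside $[1..hq]$ and, more importantly, that no \emph{other} collision occurs.

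The heart of the argument is verifying two things for sets in this family: (a) $A$ is genuinely a $B_h$-set — that is, the padded-up version of the $2a+d=2b+c$ relation is the \emph{only} additive relation, and in particular at level $h$ it produces no collision — and (b) at level $h+1$ this is the only collision, so that the count in Lemma \ref{gapLemma} is tight and $|(h+\ell)A|=M_{h+\ell,4}-\binom{\ell+2}{3}$ for every $\ell<h$. For (a) and the uniqueness in (b), I would argue that any additive relation $\mbx\cdot\vec A=\mby\cdot\vec A$ with $\mbx,\mby\in\mathcal X_{m,4}$ ($m\le h+1$) forces, after subtracting the common part, a relation $\sum (x_i-y_i)a_i=0$ with $\sum(x_i-y_i)=0$ and $\sum_i |x_i - y_i| \le 2m \le 2(h+1)$; choosing the three parameters $x,y,z$ generically (avoiding $O(h^{O(1)})$ bad hyperplanes, each of which kills $O(q^2)$ triples) ensures the only such relation with bounded coefficients is the intended one. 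This is exactly where the $h^{13}$-type loss in Lemma \ref{4max}$(iii)$ and the $h^{-5}$ gain here come from: the number of ``dangerous'' coefficient vectors is polynomial in $h$, and each forbidden linear condition removes a lower-order ($q^2$) set of parameter choices, so a positive proportion — of order $h^{-5}q^3$ — survives.

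With the family in hand, the two conclusions follow immediately. Since each such $A$ is a $B_h$-set that is not a $B_{h+1}$-set, it lies in $\mathcal B_{h,4}^*(q)$, giving $|\mathcal B_{h,4}^*(q)|=\Omega(h^{-5}q^3)$ (which is Lemma \ref{4max}$(ii)$); and since the only collision is the designed one, Lemma \ref{gapLemma}'s bound is attained with equality, so $|(h+\ell)A|=M_{h+\ell,4}-\binom{\ell+2}{3}$ for all $\ell<h$, giving the ``moreover'' clause. I expect the main obstacle to be step (a)/(b): one must rule out \emph{all} unintended collisions at every level up to $h+1$ simultaneously, which means controlling an entire polynomial-in-$h$ list of forbidden linear relations among $a,b,c,d$ and checking that imposing $2a+d=2b+c$ together with ``generic otherwise'' is consistent and still leaves $\Omega(h^{-5}q^3)$ lattice points — this bookkeeping (enumerating the bad coefficient vectors, bounding their number by a polynomial in $h$, and confirming the parameter box has side length $\Theta(q/h^{O(1)})$) is the delicate part, whereas the final counting is routine.
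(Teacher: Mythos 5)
Your overall strategy---exhibit an explicit three-parameter family with one designed collision, verify no unintended collisions occur, and count---is the same as the paper's. But your specific choice of collision is fatally flawed for $h\geq 3$. You propose to take the level-$3$ relation $2a+d=2b+c$ and ``pad'' both sum vectors with $h-2$ extra copies of the first coordinate to land in $\mathcal X_{h+1,4}$. The padded vectors $(h,0,0,1)$ and $(h-2,2,1,0)$ share support in the first coordinate, so subtracting the common part $(h-2,0,0,0)$ from both shows that the underlying additive relation is still $2a+d=2b+c$, a collision already present in $3A$. Hence any such $A$ fails to be a $B_3$-set and therefore is \emph{not} in $\mathcal B_{h,4}^*(q)$ for any $h\geq 3$; your family only produces elements of $\mathcal B_{2,4}^*(q)$. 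This is exactly the content of Lemma \ref{ortho}: for $A\in\mcb^*_{h,4}(q)$, the two colliding sum vectors in $\mathcal X_{h+1,4}$ must have \emph{disjoint} support, and moreover the relation must be one that genuinely cannot be realized with fewer than $h+1$ summands. The paper's construction uses $ha+c=(h+1)b$, i.e.\ the vectors $(h,0,1,0)$ and $(0,h+1,0,0)$, whose coefficients grow with $h$; the verification that no sub-relation $(i+j)b=ia+jc$ with $i+j\leq h$ holds is then an explicit computation using $c=(h+1)b-ha$ and the size separation $b\geq 3ha$, not a genericity argument.

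A secondary, smaller concern: your ``avoid polynomially many bad hyperplanes'' argument for ruling out unintended collisions at all levels up to $h+\ell$ is plausible (there are $O(h^6)$ dangerous coefficient vectors, each killing $O(q^2)$ parameter choices, which is lower order), but you would still need to check that the constraint hyperplane of the designed collision is not \emph{contained} in any of the bad hyperplanes---otherwise genericity cannot save you. The paper sidesteps this by choosing explicit ranges ($a\leq q(10h)^{-3}$, $b\in[3ha,\,q(10h)^{-2}]$, $c=(h+1)b-ha$, $d\geq \tfrac{99}{100}q$) that force $d$ out of every collision and reduce every putative relation among $a,b,c$ to a coefficient-matching argument, showing directly that every collision in $(h+\ell)A$ is a trivial consequence of the designed one. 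To repair your proof you would need to replace the padded collision with one whose two sum vectors have disjoint support and which provably admits no realization at level $\leq h$, and then carry out the uniqueness verification concretely rather than by appeal to genericity alone.
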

\begin{proof}
Here, we select a large family of sets from $\binom{[q]}{4}$ that will live in $\mathcal B_{h,4}^*(q).$ In particular, we will consider sets of the form $\{a,b,c,d\}$ that are $B_h$-sets, but have one collision in the $(h+1)$-fold sumset, namely
\begin{equation}\label{collision}
ha+c=(h+1)b,
\end{equation}
so they are not $B_{h+1}$-sets. Moreover, we will restrict the ranges of $a,b,c,$ and $d$ so that this is the only such equality. After that, we verify that for the subsequent iterated sumsets up to $(h+\ell)A,$ all collisions are consequences of this one.

Specifically, first let $a$ range from $1$ to $q(10h)^{-3}.$ Given a choice of $a,$ let $b$ range from $3ha$ to $q(10h)^{-2}.$ So far, we have $\Omega(q^2h^{-5})$ choices for pairs of $a$ and $b.$ Now, given choices of $a$ and $b,$ we want to satisfy \eqref{collision}, so $c$ is fixed to be $(h+1)b-ha,$ which will be some integer between $hb$ and $q(h+1)(10h)^{-2}.$ Finally, we pick $d$ to be any of the $\frac{q}{100}$ integers between $\frac{99}{100}q$ and $q.$ This gives us $\Omega(h^{-5}q^3)$ possible sets of this form.

Notice that any $\{a,b,c,d\}$ chosen as described above will satisfy $a<b<c<d,$ as well as \eqref{collision}. It is plain to see that
\begin{equation}\label{dTooBig}
(h+1)c<q(h+1)^2(10h)^{-2}<q/5<d,
\end{equation}
so there can be no collisions involving $d.$ Since $a<b<c,$ the only possible equalities arising in a $B_h$-set using those three elements must be of the form $(i+j)b=ia+jc$ for some choices of natural numbers $i$ and $j$ whose sum is $\leq h+1.$ By our choice of $c,$ this gives us
\[(i+j)b=ia+jc=ia+j((h+1)b-ha)=(i-hj)a+(h+1)jb\]
\[\Rightarrow (i-hj)b=(i-hj)a,\]
which implies that $i=h$ and $j=1,$ as $a<b.$

To prove the second statement, we notice that we are already done for $\ell =1,$ and show that this selection of sets $A\in\binom{[q]}{4}$ will satisfy the claimed equality for other $\ell < h.$ To show this, we fix a choice of $A\in\mathcal B_{h,4}^*(q)$ of the type given above. We claim that the only possible collisions in $(h+\ell)A$ must be of the form
\begin{equation}\label{trivialCollision}
ha+c+f(a,b,c,d)=(h+1)b+f(a,b,c,d),
\end{equation}
where $f(a,b,c,d)$ is some linear combination of $a,b,c,$ and $d$ with non-negative coefficients summing to $\ell.$ We call such collisions in $(h+\ell)A$ {\it trivial collisions}. To see this, notice that for any $\ell < h,$ we will get that $(h+\ell)c<d$ by arguing as in \eqref{dTooBig}. So if there were to be a nontrivial collision, it would still need to be among $a,b,$ and $c,$ and not involve $d.$ We now look for any possibly nontrivial collision in $(h+\ell)A$ and show that it must indeed be trivial. Any collision would arise from distinct sum vectors ${\bf x},{\bf y}\in\mathcal X_{h+\ell,4}$ with $x_4=y_4=0$ satisfying ${\bf x}\cdot \vec A = {\bf y}\cdot \vec A,$ which we could also write
\begin{equation}\label{3dot}
x_1a+x_2b+x_3c = y_1a+y_2b+y_3c,
\end{equation}
where $x_1+x_2+x_3 = y_1+y_2+y_3=h+\ell.$
Again, appealing to our choice of $c$, we can rewrite this as
\[x_1a+x_2b+x_3[(h+1)b-ha] = y_1a+y_2b+y_3[(h+1)b-ha]\]
\[(x_1-hx_3)a+(x_2+(h+1)x_3)b = (y_1-hy_3)a+(y_2+(h+1)y_3)b.\]
Now, notice that on each side, the coefficients of $a$ are $<2h.$ Since $2ha< 3ha\leq b,$ we see that in order for this equality to hold, we need the coefficients in $a$ on each side agree. Similarly, the coefficients in $b$ on each side must be the same. So we have
\[x_1-hx_3 = y_1-hy_3 \text{ and } x_2+(h+1)x_3 = y_2+(h+1)y_3.\]
The first equation tells us that
\begin{equation}\label{firstCoord}
x_1-y_1 = h(x_3-y_3),
\end{equation}
while the second implies
\begin{equation}\label{secondCoord}
y_2-x_2 = (h+1)(x_3-y_3),
\end{equation}
If $x_1=y_1,$ then \eqref{firstCoord} would then imply that $x_3=y_3.$ Recalling that $x_4=y_4=0,$ and ${\bf x},{\bf y}\in\mathcal X_{h+\ell,4},$ we would then see that $x_2=y_2,$ and these vectors are not distinct, which is a contradiction. A similar argument yields a contradiction if $x_3=y_3,$ so we proceed assuming $x_1\neq y_1$ and $x_3\neq y_3.$ Without loss of generality, suppose $x_1>y_1.$ So \eqref{firstCoord} and the restriction on the range of possible values for entries of vectors in $\mathcal X_{h+\ell,4}$ (namely $x_j,y_j\in[0..h+\ell]$) tells us that $x_1$ and $y_1$ cannot differ by more than $h,$ and cannot be equal, so $x_1=y_1+h.$ This also implies that $x_3=y_3+1.$ Plugging this into \eqref{secondCoord}, we get
\[y_2-x_2=(h+1)(x_3-y_3)=h+1.\]
This means that $x_2 = y_2-(h+1).$ We next show that this collision must be trivial. To see this, recall that a trivial collision will have the form given in \eqref{trivialCollision}. Now combine the above relationships between the $x_j$ and $y_j$ with \eqref{3dot} to get
\[(y_1+h)a+(y_2-(h+1))b+(y_3+1)c = y_1a+y_2b+y_3c.\]
We manipulate this to get
\[ha+c+(y_1a+y_2b+y_3c)=(h+1)b +(y_1a+y_2b+y_3c).\]
So we see that this collision is of the form given by \eqref{trivialCollision} with $f(a,b,c,d)=(y_1,y_2,y_3,0)=g(a,b,c,d)$, and is therefore a trivial collision.
\end{proof}

\subsection{Proof of Theorem \ref{1and3}:}

So by Theorem \ref{4frequent}, the most likely size of $hA$ is $M_{h,4},$ coming from $B_h$-sets $A.$ Quantitatively, we see that $M_{h,4}$ and $M_{h,4}-1$ are both $h$-frequent sizes. In particular, \eqref{BhCount} guarantees that there are $\Theta(q^4)$ sets with size $M_{h,4},$ and all others occur $O(h^8q^3)$ times altogether.

Notice that any set $A$ with $|hA|$ strictly smaller than $M_{h,4}-1$ will either be in $\mathcal B_{h-1,4}^*(q)$ or not. If it is, then by Lemma \ref{4max} $(iii),$ we know that there are $O(h^{13}q^2)$ different choices with size $M_{h,4}-2$ or $M_{h,4}-3$ (or smaller). Moreover, if $A$ was not in $\mathcal B_{h-1,4}^*(q),$ then $h>2$ and it must have been in $\mathcal B_{h-i,4}^*(q)$ for some natural number $i\in[2,h-1].$ Appealing to Lemma \ref{gapLemma}, we see that $|hA|\leq M_{h,4}-4.$ From this reasoning, we have that $M_{h,4}-2$ and $M_{h,4}-3$ are both $h$-rare sizes. See Figure \ref{fi:histogram} below.

\begin{figure}[h]
\centering
\includegraphics[scale=1.5]{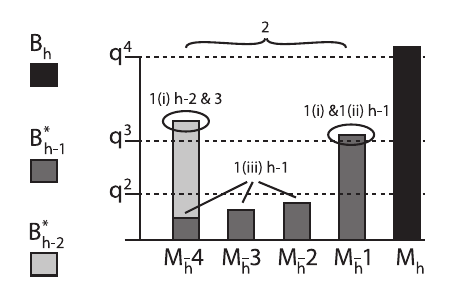}
\caption{Here we use $M_h$ in lieu of $M_{h,4}$ to save space. The numbers indicate which lemmata are used for each estimate for the five largest possible sizes of $hA.$ The size $M_h$ is most frequent, occurring $\Theta(q^4)$ times. The size $M_h-1$ occurs $\Theta(q^3)$ times, by Lemma \ref{4max} $(i)$ and $(ii)$, applied to $h-1.$ We also see how Lemma \ref{4max} gives upper bounds for the $h$-rare sizes, followed by the contribution due to $B_{h-2}^*$ spiking up at $M_h-4,$ as quantified by Lemma \ref{4max} $(i)$ applied to $h-2$ and Lemma \ref{vecPairLB2}. We know the gap must be this wide by Lemma \ref{gapLemma}.}
\label{fi:histogram}
\end{figure}

Now we apply Lemma \ref{vecPairLB2} to see that there are $\Omega(h^{-5}q^3)$ sets in $A\in \mathcal B_{h-2,4}^*(q)$ with size $M_{h,4}-4,$ so we know this size is $h$-frequent. From here we continue by applying Lemma \ref{4max} $(iii)$ again to bound the contribution to the count of sumset sizes from $M_{h,4}-5$ to $M_{h,4}-9$ from elements of $\mathcal B_{h-1,4}^*(q),$ and applying Lemma \ref{vecPairLB2} to see that $M_{h,4}-10$ must be $h$-frequent. We continue this process until $h$ triangular gaps have been guaranteed.


\section{The geometric arguments (proof of Lemma \ref{4max})}\label{lemmaProofSec}

The general strategy will be to consider the space of sets $\binom{[q]}{4} = [1..q]^4$ partitioned into subsets according to the maximal $h$ for which each element is a $B_h$-set. Then, within each of these subsets, we will identify a set of planes corresponding to different sums of elements agreeing. We then show that for sufficiently large $q,$ there are more elements in exactly one of these planes than in many such planes.

We now return to the helpful vector notation given above. Given $A\in\binom{[q]}{4},$ recall that $\vec A$ is a 4-dimensional set vector whose entries are the elements of $A$. Now, given a sum vector ${\bf x} \in \mathcal X_{h,4},$ the dot product $\vec A \cdot {\bf x}$ gives an element of $hA.$ For a given number, $s\in \N,$ the set of sum vectors ${\bf v}\in \R^4$ that have ${\bf x}\cdot {\bf v} = s$ will form a three-dimensional hyperplane in $\R^4,$ which we will call $P_{\bf x}(s),$ or when context is clear, just $P_{\bf x}.$ Moreover, such vectors ${\bf v}$ whose entries are distinct elements of $[1..q]$ correspond to sets in $\binom{[q]}{k}.$ In what follows, we will occasionally consider vectors whose elements are not distinct, leading to some potential inaccuracies. However, these inaccuracies will always be smaller than the main quantities handled.
\subsection{Proof of part $(i)$}
If $A\in \mcb^*_{h,k}(q),$ then for all distinct pairs of sum vectors ${\bf w}, {\bf z}\in\mathcal X_{h,4},$ we will have
\[\vec A \cdot {\bf w} \neq \vec A \cdot {\bf z},\]
but there must exist at least one pair of distinct sum vectors ${\bf x}, {\bf y}\in\mathcal X_{h+1,4}$ so that
\[\vec A \cdot {\bf x} = \vec A \cdot {\bf y} = s\in (h+1)A.\]
Now, the sets of sum vectors ${\bf v}$ that satisfy the equations ${\bf x} \cdot {\bf v}=s$ and ${\bf y} \cdot {\bf v} = s$ determine distinct hyperplanes in $\R^4,$ denoted by $P_{\bf x}(s)$ and $P_{\bf y}(s),$ respectively. So by definition, $\vec A$ must lie on $P_{\bf x}(s)\cap P_{\bf y}(s)$. However, $\vec A$ cannot lie on the intersection of the planes determined by any two sum vectors from $\mathcal X_{h,4}.$
\begin{lemma}\label{ortho}
Given $A\in \mcb^*_{h,4}(q),$ any distinct sum vectors ${\bf x},{\bf y}\in\mathcal X_{h+1,4}$ that have the same dot product with $\vec A$ must have disjoint support (and therefore are also orthogonal). That is, no entry is strictly positive in both ${\bf x}$ and ${\bf y}.$
\end{lemma}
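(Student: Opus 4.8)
\textbf{Proof proposal for Lemma \ref{ortho}.}

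The plan is to argue by contradiction: suppose ${\bf x},{\bf y}\in\mathcal X_{h+1,4}$ are distinct, $\vec A\cdot{\bf x}=\vec A\cdot{\bf y}$, and some coordinate $i$ has $x_i\geq 1$ and $y_i\geq 1$ simultaneously. The idea is that we can then ``cancel'' one copy of $a_i$ from both sides, producing a collision at level $h$, which contradicts $A\in\mcb_{h,4}(q)$. Concretely, set ${\bf x}':={\bf x}-{\bf e}_i$ and ${\bf y}':={\bf y}-{\bf e}_i$, where ${\bf e}_i$ is the $i$-th standard basis vector. Since $x_i,y_i\geq 1$, both ${\bf x}'$ and ${\bf y}'$ have nonnegative integer entries, and their coordinate sums are each $h$, so ${\bf x}',{\bf y}'\in\mathcal X_{h,4}$. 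Moreover ${\bf x}'={\bf y}'$ would force ${\bf x}={\bf y}$, contradicting distinctness, so ${\bf x}'\neq{\bf y}'$. Finally $\vec A\cdot{\bf x}'=\vec A\cdot{\bf x}-a_i=\vec A\cdot{\bf y}-a_i=\vec A\cdot{\bf y}'$, so ${\bf x}'$ and ${\bf y}'$ are distinct sum vectors in $\mathcal X_{h,4}$ with the same dot product against $\vec A$ — i.e., $r_{A,h}$ takes a value $\geq 2$, so $A\notin\mcb_{h,4}(q)$. This contradicts the hypothesis $A\in\mcb^*_{h,4}(q)\subseteq\mcb_{h,4}(q)$, establishing that $\support({\bf x})\cap\support({\bf y})=\emptyset$.

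For the parenthetical claim that disjoint support implies orthogonality: if no index $i$ has both $x_i>0$ and $y_i>0$, then for every $i$ at least one of $x_i,y_i$ is zero, so every term $x_iy_i$ in ${\bf x}\cdot{\bf y}=\sum_i x_iy_i$ vanishes, giving ${\bf x}\cdot{\bf y}=0$. That is an immediate one-line consequence and needs no separate work.

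I do not expect a genuine obstacle here; the lemma is essentially a repackaging of the definition of $B_h$-set. The only points requiring a little care are bookkeeping ones: checking that subtracting ${\bf e}_i$ keeps us inside $\mathcal X_{h,4}$ (which is exactly why we need $x_i,y_i\geq 1$ rather than just $x_i+y_i\geq 2$ — though in fact if the supports met at $i$ with, say, $x_i\geq 1$ but $y_i=0$ that wouldn't be an intersection, so the hypothesis $x_i,y_i\geq 1$ is the right one), and making sure the cancellation is carried out at a \emph{shared} support coordinate so that both resulting vectors stay nonnegative. One should also note that the argument uses only $A\in\mcb_{h,4}(q)$; the hypothesis $A\notin\mcb_{h+1,4}(q)$ is not needed for this particular lemma (it is used elsewhere merely to guarantee that such a colliding pair ${\bf x},{\bf y}$ exists at all), so the statement could be phrased for $A\in\mcb_{h,4}(q)$ — but stating it for $\mcb^*_{h,4}(q)$ is harmless and matches how it will be applied in the proof of part $(i)$.
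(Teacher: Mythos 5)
Your proposal is correct and follows essentially the same route as the paper: both argue by contradiction, subtract the standard basis vector at a shared support coordinate to produce two distinct vectors in $\mathcal X_{h,4}$ with equal dot products against $\vec A$, contradicting $A\in\mcb_{h,4}(q)$. Your additional remarks (verifying ${\bf x}'\neq{\bf y}'$ and that only the $B_h$ hypothesis is used) are accurate but just make explicit what the paper leaves implicit.
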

\begin{proof}
To see this, suppose for contradiction that ${\bf x}$ and ${\bf y}$ both have a positive entry in some coordinate. Without loss of generality, suppose that this is the first coordinate, and denote the first entry of $A$ by $a.$ We would then have
\[({\bf x} - (1,0,0,0))\cdot \vec A=({\bf x} \cdot A)-a = ({\bf y} \cdot A)-a = ({\bf y} - (1,0,0,0))\cdot \vec A,\]
but since ${\bf x} - (1,0,0,0)$ and ${\bf y} - (1,0,0,0)$ are both elements of $\mathcal X_{h,4},$ they cannot have the same dot product with $\vec A,$ and we arrive at a contradiction.
\end{proof}

We use this to get the following bound on the number of representations of any sum in $(h+1)A$ when $A\in \mcb^*_{h,k}(q).$
\bl \label{repNo}
If $A \in \mcb^*_{h,k}(q)$, then $r_{A,h+1}(n) \geq 2$ for some $n \in \Z$ 
and 
\[
r_{A,h+1}(n) \leq \left[ \frac{k+1}{2} \right].\]
\el

\begin{proof}
Let $A \in \mcb^*_{h,k}(q)$.  Because $A \notin \mcb_{h+1}(q)$, 
there is an integer $n$ such that $r_{A,h+1}(n) \geq 2$. Let  $n \in (h+1)A$ satisfy $r_{A,h+1}(n) = r\geq 2$.  
If $\mbx_1,\ldots, \mbx_r \in \mcx_{h,k}$ are distinct vectors such that $\mbx_i \cdot \vec{A} =  n$ 
for all $i \in [1,r]$, then by Lemma \ref{ortho}, the supports of the vectors $\mbx_1,\ldots, \mbx_r$ 
are pairwise disjoint.  Moreover, $|\support(\mbx_i)|=1$ for at most one $i \in [1,r],$ because $A$ consists of distinct elements, and so  
\[
2r-1 = 2(r-1)+1 \leq \sum_{i=1}^r |\support(\mbx_i)| \leq k. 
\]
This completes the proof. 
\end{proof} 

Applying Lemma \ref{repNo} for $k=4$ gives the following corollary.
\begin{corollary}\label{3planes}
Given $A\in \mcb^*_{h,4}(q),$ there can be at most two sum vectors from $\mathcal X_{h+1,4}$ that have any fixed dot product with $\vec A.$
\end{corollary}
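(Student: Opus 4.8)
The plan is to obtain this as the $k=4$ specialization of Lemma \ref{repNo}, so almost all the work is already done. First I would unwind the definition of the representation function: for any $n\in\Z$, the quantity $r_{A,h+1}(n)$ is by definition the number of sum vectors $\mbx\in\mcx_{h+1,4}$ with $\mbx\cdot\vec A=n$. Hence the assertion ``at most two sum vectors from $\mcx_{h+1,4}$ share any fixed dot product with $\vec A$'' is literally the statement that $r_{A,h+1}(n)\le 2$ for every integer $n$.

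Then I would invoke Lemma \ref{repNo} with $k=4$: it yields $r_{A,h+1}(n)\le\left[\frac{4+1}{2}\right]=\left[\frac{5}{2}\right]=2$, which is exactly the bound claimed. The companion lower bound $r_{A,h+1}(n)\ge 2$ for some $n$, also furnished by that lemma, records that this maximum is actually attained, consistently with the hypothesis $A\notin\mcb_{h+1,4}(q)$.

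If a self-contained argument is preferred, one can just re-run the counting from the proof of Lemma \ref{repNo} in this special case. Suppose $\mbx_1,\mbx_2,\mbx_3\in\mcx_{h+1,4}$ are distinct and all have the same dot product against $\vec A$. By Lemma \ref{ortho} their supports are pairwise disjoint, and each support is a nonempty subset of $[1..4]$ (nonempty since the entries of each $\mbx_i$ sum to $h+1\ge 1$). At most one of these supports can be a singleton: if $\support(\mbx_i)=\{p\}$ and $\support(\mbx_j)=\{q\}$ with $p\ne q$, then $\mbx_i\cdot\vec A=(h+1)a_p$ and $\mbx_j\cdot\vec A=(h+1)a_q$, forcing $a_p=a_q$ and contradicting the distinctness of the elements of $A$. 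Therefore at least two of the supports have size $\ge 2$, so $|\support(\mbx_1)|+|\support(\mbx_2)|+|\support(\mbx_3)|\ge 1+2+2=5>4$, contradicting pairwise disjointness inside a $4$-element index set.

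There is essentially no obstacle here; the only content is matching the definition of $r_{A,h+1}$ to the geometric phrasing of the corollary and checking the arithmetic $\left[\frac{5}{2}\right]=2$, since Lemmas \ref{ortho} and \ref{repNo} carry all the real weight.
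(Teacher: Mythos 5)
Your proposal is correct and matches the paper exactly: the corollary is stated there as the immediate $k=4$ specialization of Lemma \ref{repNo}, giving $r_{A,h+1}(n)\le\left[\frac{5}{2}\right]=2$. Your optional self-contained argument is just the proof of Lemma \ref{repNo} rerun with $k=4$, so nothing new is needed.
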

The next result gives an estimate on how many distinct $(h+1)$-fold sums can occur. We give the set of all possible sums over all relevant sets a name, $\mathcal S_{h+1,4}(q).$ That is
\[\mathcal S_{h+1,4}(q):=\left\{{\bf x} \cdot \vec A :{\bf x}\in\mathcal X_{h+1,4},A\in\binom{[q]}{k}\right\}.\]
\begin{lemma}\label{ddp}
We have $[5h..hn]\subseteq \mathcal S_{h+1,4}(q),$ and moreover,
\[|\mathcal S_{h+1,4}(q)|=\Theta(hq).\]
\end{lemma}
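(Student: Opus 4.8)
The plan is to pin down $\mcs_{h+1,4}(q)$ exactly, by trapping it between two intervals; both assertions of the lemma then fall out at once. The upper bound needs no work: if ${\bf x}\in\mcx_{h+1,4}$ and $A=\{a_1,\dots,a_4\}\subseteq[1..q]$, then $1\le a_i\le q$ together with $\sum_i x_i=h+1$ forces $h+1\le{\bf x}\cdot\vec A\le (h+1)q$. Hence $\mcs_{h+1,4}(q)\subseteq[h+1..(h+1)q]$, and in particular $|\mcs_{h+1,4}(q)|\le (h+1)q=O(hq)$.

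For the reverse containment I would exhibit an explicit family whose dot products already tile that interval. Fix $b\in[1..q-1]$ and pick any two further integers $c,d\in[1..q]\setminus\{b,b+1\}$ (possible once $q\ge 4$), so that $A_b:=\{b,b+1,c,d\}\in\binom{[q]}{4}$. Using the sum vector $(x,\,h+1-x,\,0,\,0)\in\mcx_{h+1,4}$ with $x\in[0..h+1]$ gives
\[
xb+(h+1-x)(b+1)=(h+1)b+(h+1-x),
\]
which sweeps over every integer of the block $B_b:=[(h+1)b\;..\;(h+1)b+h+1]$ as $x$ runs through $[0..h+1]$. Since $(h+1)b+h+1=(h+1)(b+1)$, the right endpoint of $B_b$ coincides with the left endpoint of $B_{b+1}$, so the blocks $B_1,\dots,B_{q-1}$ chain together without gaps and $\bigcup_{b=1}^{q-1}B_b=[h+1..(h+1)q]$. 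Combined with the first step this yields $\mcs_{h+1,4}(q)=[h+1..(h+1)q]$, whence $|\mcs_{h+1,4}(q)|=(h+1)q-h=\Theta(hq)$, and $[5h..hq]\subseteq\mcs_{h+1,4}(q)$ holds with room to spare, since $5h\ge h+1$ and $hq\le(h+1)q$.

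Honestly there is no serious obstacle here: everything reduces to the one-line identity above plus a count of how many consecutive blocks are needed. The only two points that want care are (i) that the padding elements $c,d$ can always be chosen, which is exactly why one requires $q\ge 4$ (harmless, since $q$ is taken large relative to $h$), and (ii) that each block $B_b$ contains $h+2$ integers, strictly more than the step $h+1$ between successive left endpoints, so that consecutive blocks genuinely overlap rather than leave a hole. Once the exact description $\mcs_{h+1,4}(q)=[h+1..(h+1)q]$ is in hand, the stated inclusion and the asymptotic $|\mcs_{h+1,4}(q)|=\Theta(hq)$ are both immediate, and no appeal to the earlier lemmas of this section is needed.
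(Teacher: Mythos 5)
Your proof is correct, and it takes a genuinely different route from the paper's. The paper fixes a target value $s\in[5h..hq]$, writes $s=ah+b$ by the division algorithm, and realizes $s$ as $(h,1,0,0)\cdot(a,b,c,d)$ or $(h+1,0,0,0)\cdot(a,c,d,e)$; this requires a small case split (and in fact is slightly delicate when the remainder $b$ is $0$, a point the paper glosses over). You instead fix the set $A_b=\{b,b+1,c,d\}$ and let the sum vector $(x,h+1-x,0,0)$ vary, so that each $A_b$ produces the full block $[(h+1)b\,..\,(h+1)(b+1)]$ of $h+2$ consecutive values; chaining these blocks over $b\in[1..q-1]$ gives the exact identity $\mathcal S_{h+1,4}(q)=[h+1..(h+1)q]$. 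This is cleaner and strictly stronger than what the paper proves: you determine the set exactly rather than only trapping its size between $\Theta(hq)$ bounds, your upper bound $(h+1)q$ is sharp where the paper's crude bound $4(h+1)q$ is not, and the stated inclusion $[5h..hq]\subseteq\mathcal S_{h+1,4}(q)$ (the paper's ``$hn$'' is evidently a typo for $hq$) follows with room to spare. The only hypotheses you use, $q\ge 4$ so that the padding elements $c,d$ exist, are harmless since $q$ is assumed large relative to $h$ throughout.
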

\begin{proof}
For any pair of vectors, where one is a sum vector and the other is a set vector, $({\bf z}, \vec A)\in\mathcal X_{h+1,4}\times \binom{[q]}{k},$ consider the maximum and minimum values of any entry. The entries of the sum vector ${\bf x}$ will all come from $[0..(h+1)],$ and the entries of the set vector $\vec A$ will all come from $[1..q].$ Since both vectors are four-dimensional, the maximum possible dot product of the form ${\bf z} \cdot \vec A$ is no more than $4(h+1)q.$ Next, fix a putative dot product $s\in[5h..hq].$ By the Division Algorithm applied to $s$ and $h,$ there exist $a\in [1..q]$ and $b\in[0..q]$ so that $s=ah+b.$ We will show that each of the $h(q-5)$ distinct values of $s$ considered here could be attained as dot products of the form ${\bf z} \cdot \vec A.$ To see this, we split into cases depending on whether or not $a$ and $b$ are distinct. If $a=b,$ we set ${\bf z} = (h+1,0,0,0),$ and $\vec A=(a,c,d,e),$ for some distinct $c,d,e\in[1..q]\setminus\{a\}.$ If $a\neq b,$ we set ${\bf z} = (h,1,0,0),$ and $\vec A=(a,b,c,d),$ for some distinct $c,d\in[1..q]\setminus\{a,b\}.$ Combining these facts, we see that there are $\Theta(hq)$ total possible distinct dot products of the form ${\bf z} \cdot \vec A.$
\end{proof}

We use this to obtain upper and lower bounds on how many possible sets $A\in\binom{[q]}{k}$ can correspond to vectors in the intersection of two distinct hyperplanes $P_{\bf x}$ and $P_{\bf y}$ coming from distinct sum vectors ${\bf x}, {\bf y}\in\mathcal X_{h+1,4}.$ Given a pair of sum vectors ${\bf x}, {\bf y} \in \mathcal X_{h+1,4}$ with disjoint support, let $T_{x,y}$ denote the number of a sets $A \in\mcb^*_{h,4}(q),$ satisfying
\begin{equation}\label{eqVec}
{\bf x} \cdot \vec A = {\bf y} \cdot \vec A.
\end{equation}

\begin{lemma}\label{vecPairUB}
Given a pair of sum vectors ${\bf x}, {\bf y} \in \mathcal X_{h+1,4}$ with disjoint support, we have that $T_{x,y}=O(hq^3).$
\end{lemma}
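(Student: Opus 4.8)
\textbf{Proof proposal for Lemma \ref{vecPairUB}.}

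The plan is to count, for a \emph{fixed} pair of sum vectors $\mathbf x,\mathbf y\in\mathcal X_{h+1,4}$ with disjoint support, how many $A=\{a_1,a_2,a_3,a_4\}\in\binom{[q]}{4}$ can satisfy the single linear equation \eqref{eqVec}. Writing $\mathbf z=\mathbf x-\mathbf y\in\Z^4$, the condition becomes $\mathbf z\cdot\vec A=0$, a single nontrivial linear relation (nontrivial because $\mathbf x\neq\mathbf y$) among the four coordinates. Since $\sum_i x_i=\sum_i y_i=h+1$, the entries of $\mathbf z$ sum to $0$, so $\mathbf z$ has at least one strictly positive and one strictly negative coordinate; in particular $\mathbf z$ is not a scalar multiple of $(1,1,1,1)$ and the hyperplane $\{\mathbf v:\mathbf z\cdot\mathbf v=0\}$ is a genuine $3$-dimensional subspace of $\R^4$. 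The number of integer points of $[1..q]^4$ on any fixed rational hyperplane through the origin is $O(q^3)$, since once three of the four coordinates are chosen freely in $[1..q]$, the remaining one is determined (and may or may not land in $[1..q]$, giving at most one valid completion). That already gives $T_{x,y}=O(q^3)$ for each fixed pair; the subtlety the lemma is really extracting is the extra factor of $h$, which must come from the fact that we are free to choose the pair $\{\mathbf x,\mathbf y\}$ but $T_{x,y}$ as defined is for one fixed pair — so I will need to re-read whether the intended statement bundles a sum over a range of "compatible" pairs, or whether the $h$ is simply a harmless overestimate.

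Reading it as literally stated (one fixed pair), the cleaner route is: the solution set of $\mathbf z\cdot\vec A=0$ inside $\R^4$ is a $3$-plane, and I will parametrize $A$ by choosing the two coordinates of $\vec A$ in the \emph{smallest-support} part of the relation last. Concretely, pick an index $j$ with $z_j\neq 0$; for every choice of the other three coordinates $a_i\in[1..q]$ ($i\neq j$) we get $a_j=-z_j^{-1}\sum_{i\neq j}z_i a_i$, at most one integer in range. This is $\le q^3=O(hq^3)$ solutions even before imposing distinctness of the $a_i$ or membership in $\mcb^*_{h,4}(q)$, so the bound holds a fortiori. The factor $h$ in the statement is then simply slack — convenient because the companion lower bound Lemma (presumably $\Omega(q^3)$ or $\Omega(h^{-1}q^3)$) and the later union bound over the $\Theta(M_{h+1,4}^2)=\Theta(h^6)$ pairs only need the orders of magnitude to line up, and carrying an extra $h$ here costs nothing downstream.

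The one genuine thing to check carefully is the \emph{nontriviality and non-degeneracy} of $\mathbf z$: I must confirm $\mathbf z\neq \mathbf 0$ (immediate from $\mathbf x\neq\mathbf y$) and, more importantly, that the resulting hyperplane really is only $3$-dimensional rather than accidentally forcing fewer free coordinates. Since $\mathbf z$ has a nonzero entry, solving for that coordinate in terms of the other three is always legitimate, so no degeneracy can occur — the count $O(q^3)$ is clean. I would also remark, matching the paper's running caveat, that allowing $a_i$ to coincide or to fall outside $[1..q]$ only \emph{inflates} the count, so the upper bound is unaffected; and that intersecting with $\mcb^*_{h,4}(q)$ (imposing that $A$ be a $B_h$-set failing $B_{h+1}$) can only shrink the set of admissible $A$, so $T_{x,y}\le q^3 = O(hq^3)$ as claimed. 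I expect the main (very mild) obstacle to be purely expository: stating the "three free coordinates, one forced" argument precisely enough that the reader sees why exactly one of the four coordinates is eliminated, and noting explicitly that the $h$ in the bound is deliberate slack rather than something that needs to be earned.
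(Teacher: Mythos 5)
Your proof is correct, and it takes a genuinely different (and in fact sharper) route than the paper's. The paper fibers the solution set over the common value $s={\bf x}\cdot\vec A={\bf y}\cdot\vec A$: for each fixed $s$ the set $P_{\bf x}(s)\cap P_{\bf y}(s)$ is a $2$-plane with $O(q^2)$ lattice points of $[1..q]^4$, and Lemma \ref{ddp} supplies $\Theta(hq)$ admissible values of $s$, giving $O(hq^3)$ by a union bound over $s$. You instead collapse the two conditions into the single homogeneous equation $({\bf x}-{\bf y})\cdot\vec A=0$ and count lattice points on one $3$-dimensional hyperplane directly, obtaining $O(q^3)$; your nondegeneracy check (${\bf x}\neq{\bf y}$ gives a nonzero coordinate of ${\bf z}$ to solve for) is exactly what is needed. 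Your suspicion that the factor of $h$ is slack is vindicated by comparing the two arguments: the paper's fibers over $s$ partition your single hyperplane, so summing $O(q^2)$ over $\Theta(hq)$ fibers overcounts relative to the true $O(q^3)$. Your route also dispenses with Lemma \ref{ddp} for this purpose and would improve the downstream bound in Lemma \ref{4max}~$(i)$ from $O(h^7q^3)$ to $O(h^6q^3)$ after multiplying by the $\Theta(h^6)$ pairs of sum vectors; the weaker exponent is harmless for the paper's conclusions, which only need polynomial-in-$h$ control. The only caveat is presentational: the $h$ in the stated bound is not ``deliberate slack'' inserted by the author but an artifact of the fibered counting, so if you adopt your argument you should either restate the lemma as $T_{x,y}=O(q^3)$ or note explicitly that $O(q^3)\subseteq O(hq^3)$ for $h\geq 1$.
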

\begin{proof}
We will get upper and lower bounds on the number of sets $A=(a,b,c,d)\in \binom{[q]}{4}$ that satisfy \eqref{eqVec}. For a given $s\in S_{h+1,4}(q),$ as above, we define $P_{\bf x}(s)$ as the set of vectors ${\bf v}\in\R^4$ such that ${\bf x} \cdot {\bf v} =s,$ and define $P_{\bf y}(s)$ similarly. As ${\bf x}$ and ${\bf y}$ are linearly independent, we see that $P_{\bf x}(s)\cap P_{\bf y}(s)$ is a plane, and therefore can have no more than $\Theta\left(q^2\right)$ lattice points from $[1..q]^4.$ So there are no more than $\Theta \left(q^2\right)$ choices of $A\in\binom{[q]}{4}$ satisfying \eqref{eqVec} for the dot product $s.$ Lemma \ref{ddp} tells us that there are $\Theta(hq)$ choices for $s\in \mathcal S_{h+1,4}(q).$ Since $\mcb^*_{h,4}(q)\subseteq \binom{[q]}{4},$ this yields the claimed upper bound.
\end{proof}

Now notice that there are $M_{h+1,4}=\Theta\left(h^3\right)$ elements in $\mathcal X_{h+1,4},$ meaning that there are $\Theta\left(h^6\right)$ pairs of distinct sum vectors that can be chosen from $\mathcal X_{h+1,4}.$ Combining this with Lemma \ref{vecPairUB} completes the proof of Lemma \ref{4max} $(i)$.

\subsection{Proof of Lemma \ref{4max} $(ii)$ for small $h$}

While Lemma \ref{vecPairLB2} already implies the statement of Lemma \ref{4max} $(ii),$ we also include a proof of a greater lower bound, but it only holds for small values of $h.$ Notice that Lemma \ref{vecPairLB2} has worse asymptotic dependence on $h,$ but a much wider range of $h.$

\begin{lemma}\label{vecPairLB1}
If $h=2$ or $3,$ there exists a pair of sum vectors ${\bf x}, {\bf y} \in \mathcal X_{h+1,4}$ with disjoint support so that $T_{x,y}=\Omega\left(q^3\right).$
\end{lemma}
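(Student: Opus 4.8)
The plan is to exhibit an explicit pair of disjoint-support sum vectors in $\mathcal X_{h+1,4}$ and a three-parameter family of sets $A \in \mcb^*_{h,4}(q)$ realizing the collision \eqref{eqVec}, mirroring the construction in Lemma \ref{vecPairLB2} but specialized so carefully to $h=2,3$ that no factor of $h$ is lost. The natural choice is ${\bf x} = (h,0,0,1)$ and ${\bf y} = (0,h,1,0)$, which have disjoint support and give the collision $ha + d = hb + c$; equivalently $c = ha - hb + d$. Unlike the general argument, here I would let three of the four coordinates range over intervals that are all $\Theta(q)$ in length, e.g. $a$ in some fixed-density subinterval near the bottom, $b$ slightly above $a$ but still $\Theta(q)$-wide, $d$ in a fixed-density subinterval near the top, and then $c$ determined by the collision equation. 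For this to stay inside $[1..q]$ with $a<b<c<d$ and for $c$ to be genuinely larger than $b$, one needs $d - hb$ to dominate, which is exactly why the small-$h$ restriction is essential: when $h=2$ or $3$, choosing $d$ close to $q$ and $a,b$ in a window of width $\asymp q$ near, say, $q/10$ still leaves $c = h(a-b)+d$ landing in $[1..q]$ with the right ordering, so we get $\Omega(q^3)$ candidate sets rather than the $\Omega(q^3 h^{-5})$ of the general case.

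After producing the family, the remaining work is to verify two things: (1) every such $A$ is a $B_h$-set, and (2) the collision $ha+d = hb+c$ is the \emph{only} one at level $h+1$, so that $A \notin \mcb_{h+1,4}(q)$ for a genuine reason and $A \in \mcb^*_{h,4}(q)$. For (1) and (2) I would argue exactly as in the proof of Lemma \ref{vecPairLB2}: by taking $d$ close to $q$ and the other three elements bounded by a small multiple of $q$, one shows $(h+1)c < d$ so no $B_{h+1}$-collision can involve $d$ except the designated one; then any equality among $a,b,c$ reduces, after substituting $c = ha - hb + d$ (or rather, since $d$ is excluded, using the relation among $a,b,c$ directly), to a linear relation $x_1 a + x_2 b + x_3 c = y_1 a + y_2 b + y_3 c$ with $\sum x_i = \sum y_i \le h+1$, and the coefficient-size argument from Lemma \ref{vecPairLB2} forces $({\bf x} - {\bf y})$ to be a multiple of the designated collision vector — which, at total degree $\le h+1$, can only be the collision itself (for $B_h$ the total degree is $\le h$, so no collision survives at all). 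This is where I would need to re-examine the inequalities with the concrete windows for $h=2,3$ to confirm nothing degenerates; the bookkeeping is short because $h$ is a constant.

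The main obstacle is the simultaneous balancing act in the parameter ranges: I need the three free coordinates to each have length $\Theta(q)$ (to get $q^3$ sets, not fewer), the dependent coordinate $c$ to land in $[1..q]$ with $b<c<d$, and $d$ to be large enough relative to $(h+1)c$ that the "$d$ is too big to collide" argument from \eqref{dTooBig} still applies. These three requirements pull against each other, and it is precisely the fact that $h \in \{2,3\}$ makes the constants work out — for instance, with $h=2$ one can take $a \in [1, q/100]$, $b \in [3a, q/50]$... no, that only gives $q^2$; instead I would take $a,b$ both ranging over a common window like $[q/20, q/10]$ with $a<b$ (still $\Theta(q^2)$ pairs), $d \in [99q/100, q]$, and $c = h(a-b)+d \in [q/2, q]$ roughly, then check $c > b$ and $(h+1)c < q$-type bounds fail — so in fact the cleaner route is to keep $a$ small ($\Theta(q)$ but in a low window), let $b - a$ range over a $\Theta(q)$-width set, and let $d$ range over a $\Theta(q)$-width high window, giving $c = ha - hb + d$; I would verify that for $h=2,3$ this triple of independent $\Theta(q)$-parameters keeps $c$ in range. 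Once the windows are pinned down, everything else is a transcription of the Lemma \ref{vecPairLB2} argument with $h$ treated as a fixed small constant, so no new ideas are required beyond getting the explicit intervals right.
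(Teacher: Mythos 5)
Your approach (exhibit one explicit pair $\mathbf{x},\mathbf{y}$ and a three-parameter family of sets realizing \eqref{eqVec}) is genuinely different from the paper's proof, which instead counts the orthogonal pairs in $\mathcal X_{h+1,4}$ (getting $|\mathcal P'_{h+1,4}|=5(h+1)^2-5$) and runs a union bound showing that for $h+1\le 4$ the collision planes at level $h+1$ outnumber all those from lower levels, so some plane contributes $\Omega(q^2)$ points for each of $\Omega(q)$ values of $s$. Your route is closer in spirit to Lemma \ref{vecPairLB2}, and in principle it can work; but as written it has a concrete gap.

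The gap is in your step (2). You choose the collision $ha+d=hb+c$, i.e.\ $\mathbf{x}=(h,0,0,1)$, $\mathbf{y}=(0,h,1,0)$, and then propose to rule out all other collisions by ``transcribing'' the argument of Lemma \ref{vecPairLB2}, whose first move is the inequality $(h+1)c<d$ from \eqref{dTooBig}, which isolates $d$ from every collision. That move is \emph{provably unavailable} for your collision: $ha+d=hb+c$ with $a<b<c$ forces $d=c+h(b-a)<c+hc=(h+1)c$, directly contradicting $(h+1)c<d$. Indeed your construction makes $c$ and $d$ comparable (both near $q$), so the reduction to a three-variable relation among $a,b,c$ does not transcribe; you would instead have to split on whether $x_3+x_4=y_3+y_4$ and redo the coefficient analysis for difference vectors of the form $(u,-u,v,-v)$, which you never do. Compounding this, you never settle the parameter windows: you propose and discard two choices inside the proposal and end with ``I would verify that \dots $c$ stays in range,'' which leaves both the $\Omega(q^3)$ count and the membership $A\in\mcb^*_{h,4}(q)$ unestablished. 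The simplest repair is to abandon the collision involving $d$ and reuse the collision $ha+c=(h+1)b$ of Lemma \ref{vecPairLB2} verbatim: for fixed $h\in\{2,3\}$ that construction already gives a single pair $\mathbf{x}=(h,0,1,0)$, $\mathbf{y}=(0,h+1,0,0)$ with $T_{x,y}=\Omega(h^{-5}q^3)=\Omega(q^3)$, and all the needed verifications are done there.
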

\begin{proof}
We are trying to find vectors of ${\bf x},{\bf y}\in \mathcal X_{h+1,4},$ so that there are many choices of sets $A\in\mathcal B_{h,4}^*(q)$ satisfying \eqref{eqVec}. Since any such choice of ${\bf x}$ and ${\bf y}$ will have both vectors coming from $\mathcal X_{h+1,4},$ we know they will be linearly independent. For each $h,$ we call this set of pairs of vectors $\mathcal P_{h,4}.$ Moreover, since they satisfy \eqref{eqVec} for $A\in\mathcal B_h(q),$ we know they have disjoint support by Lemma \ref{ortho}. Since all of the entries are nonnegative, we can phrase this as the vectors being orthogonal. In general we define
\[\mathcal P_{h,k}:=\{\{{\bf x},{\bf y}\}\subseteq \mathcal X_{h,k}^2: {\bf x} \cdot {\bf y} = 0\}.\]
To count the number of pairs in $\mathcal P_{h,4},$ we split into two cases: the case where one vector has support of size one, and the case where both vectors have support of size two.

In the first case, we have 4 choices for ${\bf x},$ a vector with a single entry of $h$. For each of those we now count how many vectors ${\bf y}$ have support outside of the support of ${\bf x}.$ To do this, we need to know how many ways three entries (possibly zero) could sum to $h.$ By stars and bars, we get that there are ${h+2 \choose 2}$ ways for three nonnegative integers to sum to $h.$ However, this is a slight overcount, as every time the stars and bars gives us a single entry of $h$ and two zero entries, we have a pair of single entry vectors, which we are counting twice. So we subtract the ${4 \choose 2} =6$ pairs we are overcounting by to get that the first case has a total of $4{h+2 \choose 2}-6$ pairs of vectors.

In the second case, we need to choose which pair of entries will be supported in which vector. There are ${4 \choose 2} = 6$ ways to choose a pair of entries, but we notice that choosing one pair of entries first implies that we will choose the other pair of entries first in another of these counts, so we divide by 2 to get a total of 3 different ways to partition the four entries into two disjoint pairs of two entries each. For each such partition, we have to have two positive integers that sum to $h,$ which we again calculate to be ${h-1 \choose 1} = h-1,$ by stars and bars, for each pair. So the total count for the second case is $3(h-1)^2.$

Putting these together, we get
\[|\mathcal P_{h,4}| = 4{h+2 \choose 2} -6+ 3(h-1)^2 = 5h^2+1.\]
Moving on, we recall that we can never have ${\bf x} \cdot \vec A = {\bf y}\cdot \vec A,$ for $A\in\mathcal B_{h,4}^*(q),$ when both ${\bf x},{\bf y}\in\mathcal X_{h,4}$ have exactly one nonnegative entry, as the entries in $A$ are distinct, so this would imply $ha=hb$ for some $a\neq b.$ Since there are ${4 \choose 2} = 6$ such pairs accounted for in $\mathcal P_{h,4},$ we are presently more concerned with the size of $\mathcal P_{h,4}',$ which is $\mathcal P_{h,4}$ with the six pairs of single-support vectors removed. So we have
\begin{equation}\label{Pcount}
|\mathcal P_{h,4}'|=|\mathcal P_{h,4}|-6 = 5h^2-5.
\end{equation}
That is, the number of pairs of vectors ${\bf x}$ and ${\bf y}$ in $\mathcal X_{h,4}$ that will satisfy \eqref{eqVec} for some $A\in\mathcal B_{h,4}^*$ is $|\mathcal P_{h,4}'|=5h^2-5.$ So we see that when $h=1,$ there are exactly $P_{1,4}=0$ pairs of vectors satisfying \eqref{eqVec} for any $A\in\binom{[q]}{4}.$ This corresponds to the very uninteresting fact that every set $A\in\binom{[q]}{4}$ being a $B_1$-set.

When $h=2,$ there are $|\mathcal P_{2,4}'|=15$ pairs of vectors $\{{\bf x},{\bf y}\}$ chosen from $\mathcal X_{2,4}$ satisfying \eqref{eqVec} for choices of $A\in\mathcal B_{1,4}^*(q).$ By definition, any set $A$ satisfying \eqref{eqVec} with some appropriate pair ${\bf x}$ and ${\bf y}$ must lie on the set $P_{\bf x}(s)\cap P_{\bf y}(s),$ where $s={\bf x}\cdot \vec A.$ Since all such pairs of ${\bf x}$ and ${\bf y}$ are linearly independent, the intersections $P_{\bf x}\cap P_{\bf y}$ are all planes. Arguing as in the proof of Lemma \ref{vecPairUB}, if $q$ is large enough, then each of these planes has $\Omega(q^2)$ points in it, and there are $\Omega(q)$ choices of $s$ for which this can happen. Putting these together gives us that $\mathcal B_{1,4}^*(q)=\Omega(q^3).$

We follow the same argument for $h=3,$ and get that by definition, for any pair ${\bf x}$ and ${\bf y}$ chosen from $\mathcal P_{3,4}',$ and any pair ${\bf z}$ and ${\bf w}$ chosen from $\mathcal P_{2,4}',$ the plane $P_x\cap P_y$ cannot intersect the plane $P_z\cap P_w$ in more than a line. Since there are only 15 pairs in $\mathcal P_{2,4}',$ and \eqref{Pcount} tells us there are 40 pairs in $\mathcal P_{3,4}',$ so even if some plane determined by a pair of vectors from $\mathcal P_{3,4}',$ could also be determined by a pair of vectors from $\mathcal P_{2,4}',$ we are still guaranteed that there are at least $40-15=25$ new planes determined by pairs of vectors in $\mathcal P_{3,4}',$ each with $\Omega(q^2)$ points, meaning that again, $\mathcal B_{2,4}^*(q)=\Omega(q^3).$

We can run the same argument yet again, but this time, we need to count planes determined by pairs of vectors from $\mathcal P_{4,4}',$ that cannot be determined by pairs of vectors from either $\mathcal P_{2,4}',$ or $\mathcal P_{1,4}'.$ For example, the plane determined by the pair $(2,2,0,0)$ and $(0,0,2,2),$ chosen from $\mathcal P_{4,4}',$ is the same as the plane determined by the pair $(1,1,0,0)$ and $(0,0,1,1)$ chosen from $\mathcal P_{1,4}'.$ However, by again appealing to \eqref{Pcount}, we get that
\[75=|\mathcal P_{4,4}'|> |\mathcal P_{3,4}'|+|\mathcal P_{2,4}'| = 40 + 15 = 55.\]
This tells us that we again have $\mathcal B_{3,4}^*(q)=\Omega(q^3).$ For $h=5,$ this approach will fail, as the union bound overtakes the size of $\mathcal P_{5,4}'.$
\end{proof}

\subsection{Proof of part $(iii)$}
Suppose that a given set vector $\vec A$ lies in the intersection $P_{\bf x}(s) \cap P_{\bf y}(s)$ for some distinct sum vectors ${\bf x}, {\bf y} \in \mathcal X_{h+1,4}$ and $s\in (h+1)A.$ By definition, every pair of sum vectors chosen from $\mathcal X_{h+1,4}$ is linearly independent. So if there is another pair of distinct sum vectors, ${\bf p},{\bf r} \in \mathcal X_{h+1,4}$ and a $t\in (h+1)A,$ so that
\[{\bf p} \cdot \vec A= {\bf r} \cdot \vec A = t,\]
then we can call their respective hyperplanes $P_{\bf p}(t)$ and $P_{\bf r}(t).$ Notice that $t\neq s$ by Corollary \ref{3planes}. Call this set of vectors $V.$ Namely, set
\[V:=\{{\bf p}, {\bf r}, {\bf x}, {\bf y}\}.\]
This gives us the following lemma.

\begin{lemma}\label{4planes}
The intersection of all four hyperplanes
\[P_{\bf x}(s)\cap P_{\bf y}(s)\cap P_{\bf p}(t)\cap P_{\bf r}(t)\]
is at most one line.
\end{lemma}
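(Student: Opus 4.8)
The plan is to show that the four hyperplanes $P_{\bf x}(s), P_{\bf y}(s), P_{\bf p}(t), P_{\bf r}(t)$ in $\R^4$ cut out an affine subspace of dimension at most one. Since each hyperplane is the zero set of an affine-linear equation, their intersection is the solution set of a $4\times 4$ linear system whose coefficient matrix $N$ has rows ${\bf x}, {\bf y}, {\bf p}, {\bf r}$ and right-hand side $(s,s,t,t)^{T}$. The intersection has dimension at most one exactly when $N$ has rank at least $3$. So the entire content of the lemma reduces to the claim that among the four sum vectors ${\bf x}, {\bf y}, {\bf p}, {\bf r}$, some three are linearly independent.

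First I would record what we already know about these four vectors. By Lemma \ref{ortho}, ${\bf x} \perp {\bf y}$ with disjoint support, and likewise ${\bf p} \perp {\bf r}$ with disjoint support; all four live in $\mathcal X_{h+1,4}$, so each has nonnegative integer entries summing to $h+1$. By Corollary \ref{3planes} (equivalently Lemma \ref{repNo} for $k=4$) no third sum vector shares the dot product $s$ with $\vec A$, and none shares $t$; in particular $s \neq t$, and $\{{\bf x},{\bf y}\} \cap \{{\bf p},{\bf r}\}$ cannot overlap in a way that would force $s=t$ — I should check that the two pairs are genuinely different pairs (if, say, ${\bf x}={\bf p}$ then $s=t$, contradiction; more carefully, if the pairs coincided as sets we would again get $s=t$). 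So ${\bf x},{\bf y},{\bf p},{\bf r}$ contains at least three distinct vectors, and I may as well assume all four are distinct (the degenerate sub-cases only make the rank argument easier, and can be mentioned in a sentence).

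The heart of the argument is then: \emph{any three pairwise-considered vectors among these four are linearly dependent only in very constrained situations, and those situations cannot all occur simultaneously.} Here is the mechanism I expect to use. Suppose for contradiction that $\operatorname{rank} N \leq 2$, so every $3$-element subset is dependent. Consider the triple $\{{\bf x}, {\bf y}, {\bf p}\}$. Since ${\bf x}$ and ${\bf y}$ are linearly independent (distinct nonzero orthogonal vectors), dependence of the triple forces ${\bf p} = \alpha {\bf x} + \beta {\bf y}$ for scalars $\alpha,\beta$. Taking the dot product with $\vec A$ and using ${\bf x}\cdot\vec A = {\bf y}\cdot\vec A = s$ and ${\bf p}\cdot\vec A = t$ gives $t = (\alpha+\beta)s$. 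Doing the same with ${\bf r} = \alpha'{\bf x}+\beta'{\bf y}$ gives $t=(\alpha'+\beta')s$. Now I use the entry-sum normalization: applying the all-ones functional, $\alpha + \beta = \alpha' + \beta' = 1$ (since each of ${\bf x},{\bf y},{\bf p},{\bf r}$ has coordinate sum $h+1$), which forces $t = s$ — contradicting Corollary \ref{3planes}. The remaining case is when $\operatorname{rank} N = 2$ but ${\bf p},{\bf r}$ both lie in $\operatorname{span}\{{\bf x},{\bf y}\}$ yet the system is still consistent only on a higher-dimensional set; but consistency plus $t\ne s$ is exactly what I just contradicted, so this closes the argument. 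Since $\operatorname{rank} N \geq 3$, the solution set $P_{\bf x}(s)\cap P_{\bf y}(s)\cap P_{\bf p}(t)\cap P_{\bf r}(t)$ has dimension $\leq 4 - 3 = 1$, i.e. is at most a line.

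The main obstacle I anticipate is the bookkeeping around degeneracies: making sure I have correctly enumerated which of ${\bf x},{\bf y},{\bf p},{\bf r}$ might coincide and why each such coincidence either is impossible or trivially gives rank $\geq 3$, and making sure the "coordinate sums are all $h+1$" trick is applied legitimately (it is the one place the specific structure of $\mathcal X_{h+1,4}$ — as opposed to arbitrary vectors — really enters, and it is what prevents the $s=t$ loophole). Everything else is linear algebra: rank at least three in $\R^4$ forces the common zero set of the corresponding affine equations to be at most one-dimensional. I would present the proof as (1) reduce to a rank statement about $N$, (2) handle coincidences among the four vectors, (3) run the span-plus-coordinate-sum contradiction to get $\operatorname{rank} N \geq 3$, (4) conclude the intersection is at most a line.
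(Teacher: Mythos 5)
Your proof is correct, and it reaches the same final contradiction as the paper ($t=\lambda s+(1-\lambda)s=s$) by a noticeably cleaner route. The paper proves the stronger statement that \emph{no} triple among $\{{\bf x},{\bf y},{\bf p},{\bf r}\}$ is linearly dependent, and to do so it runs a case analysis on supports forced by Lemma \ref{ortho}: it argues that a dependent triple would force the support sizes to be exactly $2,1,3,1$, writes out explicit coordinates $(j,h+1-j,0,0)$, $(0,0,h+1,0)$, etc., and only then extracts the relation ${\bf p}=\lambda{\bf x}+(1-\lambda){\bf y}$. You instead prove only that $\operatorname{rank}N\geq 3$ (which is all the lemma needs for a $\leq 1$-dimensional intersection), and you get the crucial normalization $\alpha+\beta=1$ for free by applying the all-ones functional, since every vector in $\mathcal X_{h+1,4}$ has coordinate sum $h+1$. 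This bypasses the support bookkeeping entirely: beyond the linear independence of ${\bf x}$ and ${\bf y}$ (which already follows from their being distinct with equal coordinate sums), you never use disjointness of supports. That makes your argument shorter and more portable --- it would transfer verbatim to $\mathcal X_{h+1,k}$ for general $k$, whereas the paper's support count $2+1+3+1$ is specific to dimension $4$. The trade-off is that you establish only ``some triple is independent'' rather than ``every triple is independent,'' but nothing downstream in the paper uses the stronger form. Your handling of the degenerate coincidences (any overlap between the pairs $\{{\bf x},{\bf y}\}$ and $\{{\bf p},{\bf r}\}$ would force $s=t$) is also correct and is a point the paper leaves implicit.
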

\begin{proof}
Recall that each pair of sum vectors is linearly independent. We will prove that $V$ has no linearly dependent triples by contradiction. To see this, suppose that we have a linearly dependent triple in $V.$ Since $V$ is comprised of two pairs of sum vectors with disjoint support (by Lemma \ref{ortho}), any triple of vectors chosen from $V$ must have a pair with disjoint support, by the pigeonhole principle. So without loss of generality, suppose ${\bf x}, {\bf y},$ and ${\bf p}$ form a linearly dependent triple. Since ${\bf x}$ and ${\bf y}$ have disjoint support, with at least one of them having at least two nonzero entries, without loss of generality, suppose that ${\bf x}$ has at least two nonzero entries. This means that ${\bf p}$ must have at least three nonzero entries. Recall that ${\bf p}$ and ${\bf r}$ must also have disjoint support, and ${\bf r}$ must have at least one nonzero entry, so ${\bf p}$ can have at most three nonzero entries. Therefore, ${\bf p}$ has exactly three nonzero entries. This means that both ${\bf y}$ and ${\bf r}$ have exactly one nonzero entry, and ${\bf x}$ has exactly two nonzero entries.

Without loss of generality, suppose that for some natural number $j,$ we have
\[{\bf x} = (j,h+1-j,0,0) \text{ and }{\bf y} = (0,0,h+1,0).\]
Recall that ${\bf p}$ is linearly dependent on the pair ${\bf x}$ and ${\bf y}.$ So there must be some rational $\lambda\in(0,1)$ so that 
\[{\bf p} = (\lambda j, \lambda (h+1-j), (1-\lambda)(h+1),0), \text{ and }{\bf r} = (0,0,0,h+1).\]
So for $\vec A=(a,b,c,d)\in P_{\bf x}(s)\cap P_{\bf y}(s) \cap P_{\bf p}(t) \cap P_{\bf r}(t),$ we have
\begin{equation}\label{sEq}
aj+(h+1-j)b =  (h+1)c =s
\end{equation}
and for $t\neq s,$
\[\lambda j a + \lambda (h+1-j) b + (1-\lambda)(h+1)c = (h+1)d=t.\]
But plugging in from \eqref{sEq}, we get
\[\lambda s+ (1-\lambda)s= (h+1)d=t,\]
which contradicts the fact that $s\neq t.$

Therefore we have no linearly dependent triples of vectors in the set $V:=\{{\bf p}, {\bf r}, {\bf x}, {\bf y}\},$ and the intersection of the four relevant hyperplanes cannot be a plane, but could potentially be a line. 
\end{proof}

The two ways that $(h+1)A$ can have size $\leq M_{h+1,4}-2$ are if there is one sum that is achieved at least three different ways or at least two sums that are achieved exactly two different ways. The first situation happens when $\vec A$ is on the intersection of three hyperplanes, $P_{\bf x}(s)\cap P_{\bf y}(s) \cap P_{\bf z}(s),$ which, as we saw above in Corollary \ref{3planes} cannot happen. Notice that on any line, there are $\leq \sqrt{4}q = 2q$ vectors from $\binom{[q]}{4}$, so the second situation happens at most $2q$ times for each of the $M_{h+1,4}^4 =\Theta\left(h^{12}\right)$ quadruples of choices of sum vectors from $\mathcal X_{h+1,4},$ by Lemma \ref{4planes}. By ranging over all possible distinct dot products using Lemma \ref{ddp}, we see that there are $\Theta(hq)$ possible choices for $s.$ Notice that the choices we have made thus far will then fix the dot product $t.$ So in total, there are $\leq 2q\Theta\left(hq\right)\Theta\left(h^{12}\right)$ possibilities for this to happen, completing the proof of Lemma \ref{4max} $(iii)$.

\section{Generalizing to larger $|A|$}\label{futureSec}
We briefly discuss how one could extend the following argument to describe similar phenomena for larger $|A|.$ Suppose $A\in\mcb^*_{h,k}(q)$. Then $|hA|=M_{h,k}.$ Moreover, by reasoning as in the proof of Lemma \ref{gapLemma}, we would have the following estimates for the sizes for successive iterated sumsets of $A:$
\[|(h+1)A| \leq M_{h+1,k}-1,\]
\[|(h+2)A| \leq M_{h+2,k}-k,\]
\[|(h+3)A| \leq M_{h+3,k}-M_{2,k},\]
\[\vdots\]
\[|(h+i)A| \leq M_{h+i,k}-M_{i-1,k}.\]
If a generalized version of Theorem \ref{1and3} with $k > 4$ were to hold, then the successive gaps between the most frequent iterated sumset sizes should have the form:
\[[M_{h+i,k}-M_{i-1,k}]-[M_{h+i-1,k}-M_{i-2,k}]=\]
\[\left[{(h+i)+k-1 \choose k-1}-{(i-1)+k-1 \choose k-1}\right]\]
\[-\left[{(h+i-1)+k-1 \choose k-1}-{(i-2)+k-1 \choose k-1}\right].\]
In particular, if $k=4,$ the differences are tetrahedral numbers, whose differences give us the triangular gaps observed above. If $k=5,$ these gaps take the form of pentatope numbers (figurative numbers based on the four-dimensional simplex). In general, we seem to get the $(k-1)$-dimensional ``champagnerpyramide" numbers (see \cite{Baumann}).

\end{document}